\numberwithin{equation}{section}
\theoremstyle{plain}
\newtheorem{theorem}{Theorem}[section]
\newtheorem{lemma}[theorem]{Lemma}
\newtheorem{proposition}[theorem]{Proposition}
\newtheorem{corollary}[theorem]{Corollary}
\newtheorem{example}[theorem]{Example}
\theoremstyle{definition}
\newtheorem{remark}[theorem]{Remark}
\begin{document}


\title[Transfer Properties in Truncated Vector Lattices]{On the Transfer of Completeness and Projection Properties in Truncated Vector Lattices}



\corrauthor[M. Habibi]{Mohamed Habibi}
\address{Preparatory Institute for Scientific and Technical Studies\\
Carthage University\\2075 Tunis\\Tunisia}
\email{mohamed.habibi@ipest.ucar.tn}

\author[H. Hafsi]{Hamza Hafsi}
\address{Preparatory Institute for Engineering Studies of Tunis\\ University of Tunis: 2 \\1089 Tunis\\Tunisia}
\email{hamza.hafsi@ipeit.rnu.tn}

\subjclass{00A99, 08A40, 06E30}

\keywords{Truncated Riesz space,  Alexandroff unitization,  Archimedean property,  Completeness,  Projection property}

\begin{abstract}
In this work we investigate the transfer of fundamental order and completeness properties
between truncated Riesz spaces and their unitizations. Specifically, we
provide characterizations and equivalences for several notions of
completeness: the Archimedean property, relatively uniform completeness,
Dedekind completeness, lateral completeness, universal completeness, and the
projection property. Counterexamples are presented to illustrate the necessity
of assumptions and the independence of various completeness notions.
\end{abstract}

\maketitle


\section{Introduction}
The notion of truncation was first introduced by Ball \cite{Ball1,Ball2} in the
context of divisible abelian lattice-ordered groups. A
truncation on an group $A$ is defined as a unary operation $\bar{\cdot
}: A^{+} \rightarrow A^{+}$ satisfying the following axioms for all $a,b \in
A^{+}$:
\begin{itemize}
	\item[$(\tau_{1})$] $a \wedge\bar{b} \leq\bar{a} \leq a$, 
	\item[$(\tau_{2})$] $\bar{a} = 0$ implies $a = 0$, 
	\item[$(\tau_{3})$] If $\overline{na} = na$ for all $n \in\mathbb{N}$, then $a
	= 0$.
\end{itemize}
This framework was later extended to Riesz spaces in \cite{bouhafsimahfoudhi}, where a
truncation is a map from the positive cone $E^{+}$ of a Riesz space $E$ into
itself, satisfying at least $(\tau_{1})$ and $(\tau_{2})$. A Riesz space
equipped with such a map is called a \emph{truncated Riesz space}.
A truncation satisfying the additional axiom $(\tau_{3})$ is called an
\emph{Archimedean truncation}. Interestingly, the Archimedean property of a
truncation and that of the underlying Riesz space are independent concepts. In
fact, on the one hand, if we consider the plan $E=%
\mathbb{R}
^{2},$ equipped with its lexicographic ordering, it's well known that $E$ is
not Archimedean Riesz space, yet the truncation defined on $E$ by
$ 
\overline{\left(  x,y\right)  }=\left(  x,y\right)  \wedge\left(  0,1\right)
\text{, for all }\left(  x,y\right)  \in E^{+}%
$ 
is an Archimedean truncation on $E$. But also, on the other hand, if
$E=\{0\}\times\mathbb{R}$, then \textit{obviously}, $E$ \textit{is an
}Archimedean vector sublattice of $%
\mathbb{R}
^{2}$. 

Define a truncat\'{\i}on on $E$ by setting:%
\[
\overline{(0,r)}=(0,\ r)\wedge(1,0)=(0,\ r)\text{ for all }r\in\mathbb{R}%
\]
(\textit{the infimum is taken in} $\mathbb{R}^{2}$). \textit{Observe that if}
$n\in\{1,2,...\}$,\textit{then }%
\[
\overline{n(0,r)}=n(0,\ r)\text{ for all }r\in\mathbb{R}.
\]
Hence this truncation is not an Archimedean truncation on $E.$
Furthermore, if there exists $u\in E^{+}$ such that for all $x\in E^{+}$, we
have $\bar{x}=x\wedge u$, then the truncation is called unital, and the Riesz
space $E$ is unital. This element $u$ is referred to as a truncation unit of
$E$. Clearly, a unital Riesz space has only one truncation unit, and condition
$\left(  \tau_{2}\right)  $ implies that the truncation unit $u$ of $E$ is a
weak order unit. It should be noted that not any truncation is unital. Take for example $E=c_{00}$ the Riesz space of eventually zero real sequences
equipped with the pointwise order and for $u=\left(  u_{n}\right)  \in
c_{00},$ let $\bar{u}=u\wedge1,$ where $1$ is the constant sequence equal $1$.

Another example illustrating this situation : Let $X$ be a locally compact
Hausdorff space. Recall that a real- valued function $f$ on $X$ is said to
vanish at infinity if, for every $\epsilon\in\left(  0,+\infty\right)  $ the
subset $\left\{  x\in X,\left\vert f\left(  x\right)  \right\vert \geq
\epsilon\right\}  $ is a compact set in $X.$ The subset $C_{0}\left(
X\right)  $ of all such fonction is a Riesz subset of $C\left(  X\right)  $
and the formula $\bar{f}\left(  x\right)  =\min\left(  1,f\left(  x\right)
\right)  $ for all $f\in C_{0}\left(  X\right)  ^{+}$ defines a truncation
which is unital if and only if $X$ is compact.

If $E$ is a truncated Riesz space, the set of all fixed points is defined by
$\bar{E}=\left\{ x\in E,\overline{\mid x\mid }=\mid x\mid \right\} $. This set will play a
fundamental role in our study.  Note,  in passing,  that $0\in\overline{E}$. 

The notion of unitization of a truncated vector lattice was introduced in
\cite{bouhafsimahfoudhi} as follows: A unital vector lattice $\epsilon E$ with truncation unit
$u\geq0$ is called a unitization of a truncated vector lattice $E$ if $E$ is a
vector sublattice of $\epsilon E$ and $\bar{x}=x\wedge u,$ for all $x\in
E^{+}.$ Obviously, any unital truncated Riesz space is its own unitization. It
was proved in \cite{BoulaChiheb} that if we identify $E$ with $E\times\left\{
0\right\}  $ and $%
\mathbb{R}
$ with $\left\{  0\right\}  \times%
\mathbb{R}
$ both in $E\times%
\mathbb{R}
,$ then $E\oplus%
\mathbb{R}
=\left\{  x+\lambda,x\in E,\lambda\in%
\mathbb{R}
\right\}  $ can be endowed with the structure of Riesz space with positive
cone defined as%
\[
\left(  E\oplus%
\mathbb{R}
\right)  ^{+}=E^{+}\cup\left\{  x+\lambda;0<\lambda\in%
\mathbb{R}
\text{, }x\in E\text{ and }\frac{1}{\lambda}x^{-}\in\bar{E}\right\}  .
\]
The absolute value of an element $x+\lambda\in E\oplus%
\mathbb{R}
$ is given by%
\[
\left\vert x+\lambda\right\vert =\left\{
\begin{array}
	[c]{l}%
	\left\vert x\right\vert -2\left\vert \lambda\right\vert \overline{\left(
		\frac{1}{\lambda}x^{-}\vee-\frac{1}{\lambda}x^{+}\right)  }+\left\vert
	\lambda\right\vert \text{ if }\lambda\neq0\\
	\left\vert x\right\vert \text{ if }\lambda=0
\end{array}
\right.
\]
It turns out that $E\oplus%
\mathbb{R}
$ is a unitization of $E$ and the truncation on $E\oplus%
\mathbb{R}
$ is defined by met of $1.$
\bigskip To be more precise regarding the relationship between $E$ and
$E\oplus%
\mathbb{R}
$,  the following theorem was established in \cite{bouhafsimahfoudhi}:
\begin{theorem}
	Let $E$ be a truncated Riesz space and $E^{\perp}$ its disjoint complement in
	$E\oplus%
	\mathbb{R}
	$. The following assertions hold
	\begin{enumerate}
		\item $\bar{E}=\left\{ x\in E:\mid x\mid \leq 1\text{ in }E\oplus 
		\mathbb{R}
		\right\} .$
		\item $E$ is a maximal ideal in $E\oplus%
		\mathbb{R}
		.$
		\item If $E$ is not unital then $E^{\perp}=\left\{  0\right\}  .$
		\item If $E$ is unital with truncation unit $u,$ then
		\[
		E^{\perp}=%
		\mathbb{R}
		\left(  1-u\right)  =\left\{  \lambda\left(  1-u\right)  ,\lambda\in%
		\mathbb{R}
		\right\}  .
		\]
		\item $E$ is dense in $E\oplus%
		\mathbb{R}
		$ if and only if, $E$ is not unital.
	\end{enumerate}
\end{theorem}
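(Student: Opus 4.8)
The plan is to establish the five assertions in an order reflecting their dependence on one another, the linchpin being (2). Assertion (1) is immediate from the definition of a unitization: since $E\oplus\mathbb{R}$ is a unitization of $E$ with truncation unit $1$, we have $\bar x=x\wedge 1$ (the meet computed in $E\oplus\mathbb{R}$) for every $x\in E^{+}$; hence, for $x\in E$, the condition $\overline{|x|}=|x|$ says exactly that $|x|\wedge 1=|x|$, i.e. $|x|\le 1$ in $E\oplus\mathbb{R}$.

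For (2), I would introduce the coordinate projection $\pi\colon E\oplus\mathbb{R}\to\mathbb{R}$, $\pi(x+\lambda)=\lambda$, and show it is a surjective Riesz homomorphism with kernel $E$. Linearity, surjectivity and $\ker\pi=E$ are clear, and $\pi$ maps the positive cone into $\mathbb{R}^{+}$ directly from the description of $(E\oplus\mathbb{R})^{+}$ recalled above. The one point that needs care is $\pi(|y|)=|\pi(y)|$: in the displayed formula for $|x+\lambda|$ with $\lambda\neq 0$, the element $\tfrac1\lambda x^{-}\vee(-\tfrac1\lambda x^{+})$ is positive (it equals $\tfrac1\lambda x^{-}$ when $\lambda>0$ and $-\tfrac1\lambda x^{+}$ when $\lambda<0$), so its truncation lies in $\bar E\subseteq E$ and the whole $|x|$-part of $|x+\lambda|$ is annihilated by $\pi$, leaving $\pi(|x+\lambda|)=|\lambda|$. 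Thus $\pi$ is a Riesz homomorphism, so $E=\ker\pi$ is an ideal, and it is proper since $1\notin E$. Maximality then follows by a routine argument: if $J$ is an ideal with $E\subsetneq J$, then $\pi(J)$ is a nonzero ideal of $\mathbb{R}$ (the image of an ideal under a surjective Riesz homomorphism), hence $\pi(J)=\mathbb{R}$, and since $\ker\pi=E\subseteq J$ every $v\in E\oplus\mathbb{R}$ can be written $v=j+(v-j)$ with $j\in J$, $\pi(j)=\pi(v)$ and $v-j\in E\subseteq J$, so $J=E\oplus\mathbb{R}$.

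Assertions (3) and (4) are then deduced from (2) and (1). Since $E^{\perp}$ is always a band and $E\cap E^{\perp}=\{0\}$, the sum $E+E^{\perp}$ is an ideal containing $E$; if $E^{\perp}\neq\{0\}$ it strictly contains $E$, hence equals $E\oplus\mathbb{R}$ by (2), so $E$ is a projection band and $1=u+w$ with $u:=P_{E}1\in E^{+}$, $w:=P_{E^{\perp}}1\in(E^{\perp})^{+}$ and $u\wedge w=0$; using $x\wedge w=0$ for $x\in E^{+}$ and the identity $x\wedge(u+w)=x\wedge u+x\wedge w$ for disjoint positive $u,w$, we get $\bar x=x\wedge 1=x\wedge u$ for all $x\in E^{+}$, i.e. $E$ is unital --- which proves (3) contrapositively. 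For (4), assume $E$ unital with truncation unit $u$; then $\bar u=u\wedge u=u$ while $\bar u=u\wedge 1$, so $u\le 1$ and $1-u\ge 0$. Setting $q:=u\wedge(1-u)$, we have $0\le q\le u\in E$, so $q\in E$ by (2), and $u+q\le 1$ forces $u+q\in\bar E$ by (1); since $\overline{u+q}=(u+q)\wedge u$, this gives $u+q=(u+q)\wedge u\le u$, hence $q=0$. Consequently, for $x\in E^{+}$ we have $(1-u)\wedge x\le x\wedge 1\le u$ and $(1-u)\wedge x\le 1-u$, whence $(1-u)\wedge x\le u\wedge(1-u)=0$; so $1-u\in E^{\perp}$ and $\mathbb{R}(1-u)\subseteq E^{\perp}$. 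For the reverse inclusion, given $y=x+\lambda\in E^{\perp}$, the element $y-\lambda(1-u)=x+\lambda u$ lies in $E$ and, being a difference of elements of $E^{\perp}$, also in $E^{\perp}$; hence $y-\lambda(1-u)=0$ and $y\in\mathbb{R}(1-u)$.

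Finally, (5) follows once one recalls that an ideal is order dense exactly when its disjoint complement is $\{0\}$: if $E$ is not unital then $E^{\perp}=\{0\}$ by (3), so $E$ is dense; if $E$ is unital with unit $u$ then $u\in E$ while $1\notin E$, so $1-u\neq 0$ and $E^{\perp}=\mathbb{R}(1-u)\neq\{0\}$ by (4), so $E$ is not dense. I expect the main obstacle to be the verification in (2) that $\pi$ preserves absolute values, worked out directly from the given expression for $|x+\lambda|$ and keeping the two sign cases of $\tfrac1\lambda x^{-}\vee(-\tfrac1\lambda x^{+})$ straight; this is what carries the maximality in (2), and an order-unit argument is not available here because $1$ need not be a strong order unit of $E\oplus\mathbb{R}$.
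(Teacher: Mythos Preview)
The paper does not supply its own proof of this theorem: it is quoted in the introduction with the remark that it ``was established in \cite{bouhafsimahfoudhi}'', and the text moves immediately to Section~2 without argument. So there is no in-paper proof to compare your proposal against.

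That said, your outline is correct and self-contained. Part (1) is indeed immediate from the defining property $\bar x=x\wedge 1$ of the unitization. For (2), the projection $\pi(x+\lambda)=\lambda$ is the natural tool; your verification that $\pi(|x+\lambda|)=|\lambda|$ from the displayed formula for $|x+\lambda|$ is the right computation, and once $\pi$ is a Riesz homomorphism with kernel $E$, maximality is forced simply because $E$ has codimension one (you do not even need the parenthetical claim that images of ideals are ideals --- any nonzero linear subspace of $\mathbb{R}$ is already all of $\mathbb{R}$). In (3) your use of the identity $x\wedge(u+w)=x\wedge u+x\wedge w$ is legitimate because $u\wedge w=0$ gives $u+w=u\vee w$, whence $x\wedge(u+w)=(x\wedge u)\vee(x\wedge w)$, and the two summands are disjoint. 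The argument for (4) via $q=u\wedge(1-u)$ is clean, and (5) is a direct consequence of the standard equivalence ``an ideal is order dense iff its disjoint complement is trivial'' together with (3) and (4) (noting $u\neq 1$ since $u\in E$ and $1\notin E$).

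In short: your proof is sound; there is simply nothing in this paper to set it against.
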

\section{Preliminary Results}
In this section, we will gather the basic properties of truncated Riesz
spaces. Some of them can be found in \cite{Boulstructuretheorem}.  We include detailed proofs for
clarity and convenience.
\begin{proposition}
	Let $\bar{\cdot}$ be a truncation on $E$. The following conditions are equivalent:
	\begin{enumerate}
		\item[(i)] For all $a,b\in E^{+}$, we have $a\wedge\bar{b}\leq\bar{a}\leq a $.
		
		\item[(ii)] For all $a,b\in E^{+}$, we have $a\wedge\bar{b}=\bar{a}\wedge b$.
	\end{enumerate}
\end{proposition}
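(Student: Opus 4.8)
The plan is to prove the two implications separately, exploiting in each direction a single well-chosen substitution together with the commutativity of $\wedge$.

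\emph{$(ii)\Rightarrow(i)$.} First I would recover the domination $\bar a\le a$ for every $a\in E^{+}$, since this is the only piece of (i) not essentially visible in (ii). Applying the identity of (ii) to the pair $(a,\bar a)$ — legitimate because $\bar a\in E^{+}$ — and using $\bar a\wedge\bar a=\bar a$, one gets $a\wedge\overline{\bar a}=\bar a$, hence $\bar a=a\wedge\overline{\bar a}\le a$. The other inequality, $a\wedge\bar b\le\bar a$, is then immediate from (ii): $a\wedge\bar b=\bar a\wedge b\le\bar a$.

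\emph{$(i)\Rightarrow(ii)$.} Fix $a,b\in E^{+}$. From (i) I obtain at once $a\wedge\bar b\le\bar a$, and also (taking the second inequality of (i) with $a$ replaced by $b$) $\bar b\le b$, so that $a\wedge\bar b\le\bar b\le b$. Combining the two bounds gives $a\wedge\bar b\le\bar a\wedge b$. Interchanging the roles of $a$ and $b$ in this last inequality yields $b\wedge\bar a\le\bar b\wedge a$, i.e. $\bar a\wedge b\le a\wedge\bar b$. The two inequalities together force $a\wedge\bar b=\bar a\wedge b$, which is (ii).

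Everything here reduces to one-line manipulations of $(\tau_{1})$-type inequalities and the symmetry of $\wedge$, so there is no real obstacle; the only step needing a moment's thought is the choice of the test element $b=\bar a$ in the first implication, which is precisely what converts the symmetric identity (ii) back into the asymmetric inequality $\bar a\le a$. I would also remark that neither direction invokes axiom $(\tau_{2})$, so the equivalence holds for an arbitrary unary map $E^{+}\to E^{+}$, not just for a truncation.
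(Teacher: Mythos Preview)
Your proof is correct and follows essentially the same route as the paper: for $(i)\Rightarrow(ii)$ you combine $a\wedge\bar b\le\bar a$ with $\bar b\le b$ and then appeal to symmetry in $a,b$, and for $(ii)\Rightarrow(i)$ you use the substitution $b=\bar a$ to recover $\bar a\le a$, exactly as the paper does via $\bar a=\bar a\wedge\bar a=a\wedge\overline{\bar a}\le a$. If anything, you are slightly more explicit than the paper, which states only the inequality $a\wedge\bar b\le\bar a\wedge b$ and concludes equality without spelling out the symmetry step; your closing remark that $(\tau_2)$ plays no role is also accurate.
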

\begin{proof}
	(i) $\Rightarrow$ (ii) Choose $a,b\in E^{+}$and observe that%
	\[
	a\wedge\bar{b}\leq\bar{a}\text{ and }a\wedge\bar{b}\leq\bar{b}\leq b
	\]
	We derive that%
	\[
	a\wedge\bar{b}\leq\bar{a}\wedge b
	\]
	and so $a\wedge\bar{b}=\bar{a}\wedge b$, as desired.
	
	(ii) $\Rightarrow$ (i) Pick $a,b\in E^{+}$. We have%
	\[
	a\wedge\bar{b}=\bar{a}\wedge b\leq\bar{a}=\bar{a}\wedge\bar{a}=a\wedge
	\overline{\bar{a}}\leq a
	\]
	This completes the argument.
\end{proof}
\bigskip
The following properties are immediate consequences of the definition of a truncation:
\begin{proposition}
	Let $E$ be a truncated Riesz space. For all $x,y \in E^{+}$, the following hold:
	\begin{enumerate}
		\item $\bar{x}\leq x.$
		\item $x\leq y$ implies $\bar{x}\leq\bar{y}$;
		\item $\bar{\bar{x}}=\bar{x}$ (idempotency);
		\item $x\in\bar{E}$ if and only if $x=\bar{y}$ for some $y\in E^{+}$;
		\item If $x\leq y$ and $y\in\bar{E}$, then $x\in\bar{E}$;
		\item (Birkhoff's Inequality) $\left\vert \bar{x}-\bar{y}\right\vert
		\leq\overline{\left\vert x-y\right\vert }$.
	\end{enumerate}
\end{proposition}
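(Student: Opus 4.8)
The plan is to deduce all six items from the truncation axioms $(\tau_{1})$ and $(\tau_{2})$ — together with the equivalent form $a\wedge\bar b=\bar a\wedge b$ supplied by the preceding proposition — proving them in the stated order so that each may invoke its predecessors.

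Items (1)--(5) are short readings of the axioms. Item (1), $\bar x\le x$, is literally part of $(\tau_{1})$. For (2), apply $(\tau_{1})$ with $a=y$, $b=x$ to get $y\wedge\bar x\le\bar y$; since $\bar x\le x\le y$ forces $y\wedge\bar x=\bar x$, this is $\bar x\le\bar y$. For (3), apply $(\tau_{1})$ with $a=\bar x$, $b=x$: $\bar x=\bar x\wedge\bar x\le\overline{\bar x}\le\bar x$, whence equality. For (4), note that for $x\in E^{+}$ the condition $x\in\bar E$ says exactly $\bar x=x$; one direction is then immediate with $y=x$, and conversely $x=\bar y$ gives $\bar x=\overline{\bar y}=\bar y=x$ by idempotency. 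For (5), from $y=\bar y$, $x\le y$ and $(\tau_{1})$ (with $a=x$, $b=y$) we get $x=x\wedge y=x\wedge\bar y\le\bar x$, and combined with (1) this yields $\bar x=x$, i.e. $x\in\bar E$.

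The one genuinely non-routine step is Birkhoff's inequality (6), and the key is an auxiliary subadditivity estimate
\[
\overline{a+b}\le\bar a+\bar b\qquad(a,b\in E^{+}).
\]
Granting this, (6) follows quickly: since $x-y\le|x-y|=:d$ we have $x\le y+d$, so by monotonicity (2) and the estimate $\bar x\le\overline{y+d}\le\bar y+\bar d$, that is $\bar x-\bar y\le\overline{|x-y|}$; interchanging $x$ and $y$ and taking the supremum gives $|\bar x-\bar y|\le\overline{|x-y|}$. To prove the estimate itself I would start from the standard identity $\overline{a+b}=\bigl(\overline{a+b}\wedge a\bigr)+\bigl(\overline{a+b}-a\bigr)^{+}$, simplify $\overline{a+b}\wedge a=(a+b)\wedge\bar a=\bar a$ (using $\bar a\le a\le a+b$ and the reformulation), so that $\overline{a+b}=\bar a+\bigl(\overline{a+b}-a\bigr)^{+}$, and then show $\bigl(\overline{a+b}-a\bigr)^{+}\le\bar b$. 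For this, observe that $\bigl(\overline{a+b}-a\bigr)^{+}\le\overline{a+b}$ and that $\overline{a+b}\in\bar E$ by (4); hence $\bigl(\overline{a+b}-a\bigr)^{+}\in\bar E$ by (5), so it is a positive fixed point of the truncation, while $\overline{a+b}\le a+b$ gives $\bigl(\overline{a+b}-a\bigr)^{+}\le b$, and applying (2) to this last inequality yields $\bigl(\overline{a+b}-a\bigr)^{+}=\overline{\bigl(\overline{a+b}-a\bigr)^{+}}\le\bar b$. I expect this shuttling between the fixed-point set $\bar E$ and the order to be the crux; the remaining manipulations are routine Riesz-space identities such as $u=u\wedge v+(u-v)^{+}$ and $(u-v)^{+}\le u$ for $u,v\ge0$.
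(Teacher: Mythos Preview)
Your proofs of items (1)--(5) are correct and essentially match the paper's, with only cosmetic differences in which form of the axiom is invoked at each step.

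Your proof of item (6) is also correct but follows a genuinely different route. The paper does not prove or use the subadditivity estimate $\overline{a+b}\le\bar a+\bar b$; instead it first obtains two separate bounds, namely $|\bar x-\bar y|\le|x-y|$ (by writing $\bar x=x\wedge\overline{x\vee y}$ and $\bar y=y\wedge\overline{x\vee y}$ via the reformulation, and then invoking the classical Birkhoff inequality for meets with a common element) and the trivial bound $|\bar x-\bar y|\le\overline{x+y}$. It then combines them through one more application of the symmetric reformulation $a\wedge\bar b=\bar a\wedge b$:
\[
|\bar x-\bar y|\;\le\;\overline{x+y}\wedge|x-y|\;=\;(x+y)\wedge\overline{|x-y|}\;\le\;\overline{|x-y|}.
\]
Your argument is slightly longer but isolates a subadditivity lemma of independent interest and works purely through monotonicity; the paper's argument is shorter and exploits the reformulation directly at the last step rather than passing through a decomposition of $\overline{a+b}$.
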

\begin{proof}
	For all $x,y\in E^{+}$
	\begin{enumerate}
		\item We have that, $\overline{x}=\overline{x}\wedge\bar{x}=x\wedge\bar
		{\bar{x}}\leq x.$
		\item If $x\leq y$ implies $\overline{x}=\overline{x}\wedge x\leq\overline
		{x}\wedge y=x\wedge\overline{y}\leq\overline{y}$
		\item we have $\bar{\bar{x}}\leq\bar{x}$ and $\bar{x}=\bar
		{x}\wedge\bar{x}=\bar{\bar{x}}\wedge x\leq\bar{\bar{x}}\leq \bar{x}$ ,then $\bar{\bar{x}%
		}=\bar{x}$
		\item If $x\in\bar{E},$ then $x=\bar{x}.$
		The converse follows directly from the definition of $\bar{E}$ and the first inequality.
		\item We have that $x=x\wedge y=x\wedge\bar{y}=\bar{x}\wedge y\leq\bar{x}\leq x$. Hence $\bar{x}=x$ and $x\in\bar{E}$.
		\item Using the standart Birkhoff's Inequality ,\cite[Theorem 1.9]{positiveoperator}, we have
		that
		\begin{align*}
			|\bar{x}-\bar{y}|  & =|\bar{x}\wedge\left(  x\vee y\right)  -\bar{y}%
			\wedge\left(  x\vee y\right)  |\\
			& =|x\wedge\overline{\left(  x\vee y\right)  }-y\wedge\overline{\left(  x\vee
				y\right)  }|\\
			& \leq|x-y|.
		\end{align*}
		Moreover, using $\left(  1\right)  ,$ we get%
		\[
		\bar{x}-\bar{y}\leq\bar{x}\leq\overline{x+y}%
		\]
		and similary, $\bar{y}-\bar{x}\leq\overline{x+y}.$ Then, we have that%
		\begin{align*}
			|\bar{x}-\bar{y}|  & \leq\overline{\left(  x+y\right)  }\wedge|x-y|\\
			& =\left(  x+y\right)  \wedge\overline{|x-y|}\\
			& \leq\overline{|x-y|}.
		\end{align*}
	\end{enumerate}
\end{proof}
The set of all fixed points determines the truncation entirely, as stated in
the following result:
\begin{lemma}
	Two truncations on the same Riesz space $E$ are equal if and only if they have
	the same set of fixed points.
\end{lemma}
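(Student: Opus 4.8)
The plan is to reduce the entire statement to an order-theoretic description of a truncation in terms of its fixed points. The forward implication is immediate: if two truncations coincide as maps $E^{+}\to E^{+}$, then in particular they have the same fixed points. So the content is in the converse.

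For the converse, I would first record the following characterization, valid for \emph{any} truncation $\bar{\cdot}$ on $E$: for every $x\in E^{+}$, the element $\bar{x}$ is the greatest element of the set $\bar{E}\cap[0,x]=\{\,y\in\bar{E}:0\le y\le x\,\}$. Indeed, $\bar{x}\le x$ by part (1) of the preceding proposition, and $\bar{x}\in\bar{E}$ by idempotency (part (3)), so $\bar{x}$ lies in this set; conversely, if $y\in\bar{E}$ with $0\le y\le x$, then monotonicity of the truncation (part (2)) gives $y=\bar{y}\le\bar{x}$, so $\bar{x}$ dominates every member of $\bar{E}\cap[0,x]$. Hence $\bar{x}=\max\bigl(\bar{E}\cap[0,x]\bigr)$, and the maximum of a subset of a poset, when it exists, is unique.

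Now suppose $\bar{\cdot}$ and $\bar{\cdot}\,'$ are two truncations on $E$ with fixed-point sets $\bar{E}$ and $\bar{E}\,'$, and assume $\bar{E}=\bar{E}\,'$. Fix $x\in E^{+}$. Applying the characterization to each truncation, both $\bar{x}$ and $\bar{x}\,'$ equal the greatest element of $\bar{E}\cap[0,x]=\bar{E}\,'\cap[0,x]$; by uniqueness of the greatest element, $\bar{x}=\bar{x}\,'$. Since $x\in E^{+}$ was arbitrary, the two truncations coincide, which is the claim.

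I do not expect a genuine obstacle here: the only point that requires care is the verification that $\bar{x}$ dominates \emph{every} fixed point below $x$, which is precisely where monotonicity (part (2) of the previous proposition) enters, together with the observation that this property pins down $\bar{x}$ using nothing but the order of $E$ and the set $\bar{E}$.
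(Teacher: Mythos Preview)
Your proof is correct and uses the same ingredients as the paper's: monotonicity of the truncation together with the fact that $\bar{x}$ is itself a fixed point. The only difference is presentational: the paper argues directly that $x^{\ast}\le x^{\star}$ by applying $\star$-monotonicity to the inequality $x^{\ast}\le x$ and invoking $x^{\ast}\in E_{\ast}=E_{\star}$ (then symmetrically), whereas you first isolate the identity $\bar{x}=\max\bigl(\bar{E}\cap[0,x]\bigr)$ and then read off the conclusion from uniqueness of the maximum.
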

\begin{proof}
	Consider two truncations $x\mapsto x^{\ast}$ and $x\mapsto x^{\star}$ on a Riesz
	space $E$ and let $E_{\ast}=\{x\in E:\mid x\mid^{\ast}=\mid x\mid\},E_{\star}=\{x\in
	E:\mid x\mid^{\star}=\mid x\mid\}.$
	
	The \textquotedblleft only if\textquotedblright\ direction is obvious. For the
	\textquotedblleft if\textquotedblright\ part, suppose that $E_{\ast}=E_{\star
	}$and let $x\in E^{+}$. Since $x^{\ast}\leq x$, we have $(x^{\ast})^{\star
	}\leq x^\star$.
	But $x^{\ast}\in E_\ast= E_\star$,  hence $(x^{\ast})^{\star}=x^{\ast},$ and so
	$x^{\ast}\leq x^\star$.
	Similarly,  $x^{\star}\leq x^{\ast}$, which implies $x^{\ast}=x^{\star}$.  This
	proves the proposition.
\end{proof}
\section{Archimedean Property}
In this section, we examine necessary and sufficient conditions under which
the unitization $E \oplus\mathbb{R}$ of a truncated Riesz space $E$ is Archimedean.
It is straightforward to observe that if $E \oplus\mathbb{R}$ is Archimedean,
then the element $1$ acts as a strong order unit and the truncation satisfies
condition $(\tau_{3})$ introduced by Ball \cite{Ball1}. The following result provides a
complete characterization of this situation.
\begin{theorem}
	Let $E$ be a truncated Riesz space. Then $E \oplus\mathbb{R}$ is Archimedean
	if and only if $E$ is Archimedean and the truncation satisfies $(\tau_{3})$.
\end{theorem}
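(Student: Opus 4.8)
The plan is to prove both implications, the forward one being routine and dispatched first. Suppose $E\oplus\mathbb{R}$ is Archimedean. Since $E$ is a vector sublattice of $E\oplus\mathbb{R}$ with the restricted order, the Archimedean property descends to $E$. For $(\tau_{3})$, let $a\in E^{+}$ satisfy $\overline{na}=na$ for every $n\in\mathbb{N}$; as the truncation on $E\oplus\mathbb{R}$ is $w\mapsto w\wedge 1$, this says $na\wedge 1=na$, i.e. $na\le 1$ in $E\oplus\mathbb{R}$ for all $n$, and Archimedeanness of $E\oplus\mathbb{R}$ forces $a=0$.

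Conversely, assume $E$ is Archimedean and satisfies $(\tau_{3})$, and let $y,z\in(E\oplus\mathbb{R})^{+}$ with $ny\le z$ for all $n\in\mathbb{N}$; I must show $y=0$. Write $y=a+\alpha$ and $z=b+\beta$ with $a,b\in E$ and $\alpha,\beta\in\mathbb{R}$. Recall that $E$ is a maximal ideal of $E\oplus\mathbb{R}$ of codimension one, so the quotient $(E\oplus\mathbb{R})/E$ is a one-dimensional (hence totally ordered) Riesz space on which the quotient map $q$ acts by $a+\lambda\mapsto\lambda\,q(1)$ with $q(1)>0$; applying $q$ to $0\le ny\le z$ gives $0\le n\alpha\le\beta$ in $\mathbb{R}$ for every $n$, whence $\alpha=0$ and $y=a\in E^{+}$. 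If moreover $\beta=0$, then $z=b\in E^{+}$, the relations $na\le b$ hold in $E$, and Archimedeanness of $E$ gives $a=0$.

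It remains to treat $\beta>0$, which is the heart of the matter. After replacing $a,b$ by $a/\beta,b/\beta$ I may assume $\beta=1$, so $na\le b+1$, hence $na\le b^{+}+1$, for all $n\in\mathbb{N}$. Fix $m\ge 1$. For every $p\ge 1$ one has $pm\,a\le b^{+}+1$, so $ma\le\tfrac{1}{p}b^{+}+\tfrac{1}{p}\le\tfrac{1}{p}b^{+}+1$, which gives $(ma-1)^{+}\le\tfrac{1}{p}b^{+}$ in $E\oplus\mathbb{R}$. Taking $p=1$ shows $0\le(ma-1)^{+}\le b^{+}$, so $(ma-1)^{+}\in E$ because $E$ is an ideal; hence $p\,(ma-1)^{+}\le b^{+}$ in $E$ for all $p\ge 1$, and Archimedeanness of $E$ forces $(ma-1)^{+}=0$, i.e. $ma\le 1$ in $E\oplus\mathbb{R}$. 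Since $m\ge 1$ was arbitrary, $\overline{ma}=ma\wedge 1=ma$ for all $m$, so $(\tau_{3})$ gives $a=0$, i.e. $y=0$; thus $E\oplus\mathbb{R}$ is Archimedean.

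The step I expect to be the main obstacle is the passage from ``$na\le b^{+}+1$ for all $n$'' to ``$ma\le 1$ for all $m$'': one cannot apply the Archimedean hypothesis of $E$ against $b^{+}+1$ directly since $b^{+}+1\notin E$, and the device is that $(ma-1)^{+}$ is trapped below $b^{+}\in E$ and therefore lies in the ideal $E$, where Archimedeanness is available, after which $(\tau_{3})$ absorbs exactly the residual comparison with the unit $1$. The reduction via the quotient map and the elementary lattice estimates are routine.
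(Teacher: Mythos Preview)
Your proof is correct and follows essentially the same strategy as the paper's: reduce to $\alpha=0$ by reading off the scalar part (you do this via the quotient $(E\oplus\mathbb{R})/E$, the paper reads it directly from the description of the positive cone), and then in the case $\beta>0$ bound $p\,(ma-1)^{+}$ by an element of $E$, apply Archimedeanness of $E$ to conclude $ma\le 1$ for all $m$, and finish with $(\tau_{3})$. The only cosmetic differences are your normalization $\beta=1$ and your use of the ideal property to place $(ma-1)^{+}$ in $E$, where the paper instead uses the identity $(ma-1)^{+}=ma-\overline{ma}\in E$.
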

\begin{proof}
	Assume that $E$ is Archimedean and that $(\tau_{3})$ holds. Let $x,y\in E$ and
	$\lambda,\beta\in\mathbb{R}$ such that
	\[
	0\leq n(x+\lambda)\leq y+\beta\quad\text{for all }n\in\mathbb{N}.
	\]
	This implies that $\lambda\geq0$ and
	\[
	0\leq y-nx+\beta-n\lambda\quad\text{for all }n.
	\]
	In particular, it follows that $0\leq\beta-n\lambda$ for all $n$ . Then,
	$\lambda=0$ and $0\leq nx\leq y+\beta$ for all $n$. If $\beta=0,$ by the
	Archimedean property of $E$, we conclude that $x=0$, and hence $x+\lambda=0$.
	If $\beta>0$, fix $n\in\mathbb{N}^{\ast}$ and consider $m\geq 1$.  It follows that
	$$0\leq nmx\leq y+\beta \leq y+m\beta .$$
	Thus, we obtain the following inequality
	\[
	0\leq\frac{nm}{\beta}x\leq\frac{1}{\beta}y+m.
	\]
	This yields%
	\[
	m\left(  \frac{n}{\beta}x-1\right)  ^{+}\leq\frac{1}{\beta}y^{+}.
	\]
	But%
	\begin{align*}
		\frac{n}{\beta}x-\overline{\frac{n}{\beta}x}  & =\frac{n}{\beta}x-\frac
		{n}{\beta}x\wedge1\\
		& =0\vee\left(  \frac{n}{\beta}x-1\right)  \\
		& =\left(  \frac{n}{\beta}x-1\right)  ^{+}%
	\end{align*}
	Then
	\[
	m\left(  \frac{n}{\beta}x-\overline{\frac{n}{\beta}x}\right)  ^{+}\leq\frac
	{1}{\beta}y^{+}.
	\]
	Since this inequality holds for all $m$, the Archimedean property of $E$
	implies
	\[
	\frac{n}{\beta}x=\overline{\frac{n}{\beta}x}.
	\]
	Then by $(\tau_{3})$, we obtain $x=0$. Thus, $x+\lambda=0$, proving that
	$E\oplus\mathbb{R}$ is Archimedean.
	Conversely, assume that $E \oplus\mathbb{R}$ is Archimedean. Since $E$ is a
	vector sublattice of $E \oplus\mathbb{R}$, it is also Archimedean. Now, let $x
	\in E^{+}$ be such that $\overline{nx} = nx$ for all $n$. Then $nx \leq1$ in
	$E \oplus\mathbb{R}$ for all $n$, and since this space is Archimedean, it
	follows that $x = 0$. Hence $(\tau_{3})$ holds.
\end{proof}
The result above leads to a simpler characterization in the unital case.
\begin{corollary}
	Let $E$ be a unital truncated Riesz space. Then $E\oplus\mathbb{R}$ is
	Archimedean if and only if $E$ is Archimedean.
	\begin{proof}
		The forward implication is immediate. Conversely, assume that $E$ is
		Archimedean and let $u$ be its truncation unit. If $x\in E^{+}$ satisfies
		$\overline{nx}=nx$ for all $n,$ then%
		\[
		nx=nx\wedge u\leq u\text{ for all }n
		\]
		This implies $x=0$ by the Archimedean property of $E,$ so $\left(  \tau
		_{3}\right)  $ hold. Then, by the previous theorem $E\oplus\mathbb{R}$ is
		Archimedean. 
	\end{proof}
\end{corollary}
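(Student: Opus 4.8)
The plan is to read the statement off the preceding theorem, which already characterizes when $E\oplus\mathbb{R}$ is Archimedean: namely, exactly when $E$ is Archimedean \emph{and} the truncation satisfies $(\tau_{3})$. So the only thing the corollary really asserts is that, for a \emph{unital} truncation, the axiom $(\tau_{3})$ is automatic as soon as $E$ is Archimedean; granting that, the stated equivalence is immediate.

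For the forward implication I would just observe that $E$ is a vector sublattice of $E\oplus\mathbb{R}$, and a vector sublattice of an Archimedean Riesz space is Archimedean (any chain $0\le nx\le y$ inside the sublattice is also such a chain in the ambient space). Unitality is not needed here.

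For the converse, assume $E$ is Archimedean and let $u\in E^{+}$ be its truncation unit, so that $\overline{x}=x\wedge u$ for all $x\in E^{+}$. In view of the theorem it suffices to verify $(\tau_{3})$. Let $x\in E^{+}$ satisfy $\overline{nx}=nx$ for every $n\in\mathbb{N}$. Unwinding the unital truncation, $\overline{nx}=nx\wedge u$, so $nx\wedge u=nx$, i.e.\ $0\le nx\le u$ for all $n$. Since $E$ is Archimedean and $u\in E^{+}$, this forces $x=0$, which is exactly $(\tau_{3})$. Applying the theorem then yields that $E\oplus\mathbb{R}$ is Archimedean.

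I do not expect a real obstacle: all the substance is in the theorem, and what remains is the elementary remark that, in a unital truncation, $\overline{nx}=nx$ just says $nx$ sits below the truncation unit $u$, a situation the Archimedean property of $E$ immediately destroys. The only point worth stating explicitly is that the Archimedean property is being invoked with $u$ playing the role of the fixed upper bound, with no appeal to $u$ being a weak order unit.
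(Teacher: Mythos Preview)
Your proposal is correct and follows essentially the same route as the paper: you reduce the converse to verifying $(\tau_{3})$ via $nx=\overline{nx}=nx\wedge u\le u$ and then invoke the Archimedean property of $E$ together with the preceding theorem, which is exactly what the paper does. The only cosmetic difference is that you spell out why the forward implication holds (sublattices of Archimedean spaces are Archimedean), whereas the paper simply calls it immediate.
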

The following theorem relates the set of fixed points of the truncation to its
unit element, thus concluding this section.
\begin{theorem}
	Let $E$ be a non unital truncated Riesz space such that $E\oplus%
	\mathbb{R}
	$ is Archimedean.
	For every $x\in E\oplus%
	\mathbb{R}
	$ with $x>0,$ $\sup\left\{  \bar{y},y\in E,0\leq y\leq x\right\}  =\bar{x}$,
	(where the supremum is taken in $E\oplus%
	\mathbb{R}
	).$ In particular,  $\sup\left\{  \bar{x},x\in E^{+}\right\}  =1.$
\end{theorem}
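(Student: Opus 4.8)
The plan is to identify $\bar x$ with $x\wedge 1$ in $E\oplus\mathbb R$ and then combine two facts: that $E$ is order dense in $E\oplus\mathbb R$ (Theorem~1.1(5), since $E$ is not unital), and the infinite distributive law, which holds in every vector lattice.

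First I would check that $\bar x$ is an upper bound of $S:=\{\bar y:y\in E,\ 0\le y\le x\}$. If $y\in E$ and $0\le y\le x$, then by the definition of the unitization $\bar y=y\wedge 1$, so monotonicity of the lattice operations gives $\bar y=y\wedge 1\le x\wedge 1=\bar x$.

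The heart of the proof is to show that $x=\sup D_x$ in $E\oplus\mathbb R$, where $D_x:=\{y\in E:0\le y\le x\}$. Since $E$ is not unital, Theorem~1.1(5) tells us that $E$ is order dense in $E\oplus\mathbb R$, i.e.\ every strictly positive element of $E\oplus\mathbb R$ dominates a strictly positive element of $E$. Now $x$ is an upper bound of $D_x$; if $g\in E\oplus\mathbb R$ is another upper bound, then replacing $g$ by $(g\vee 0)\wedge x$ we may assume $0\le g\le x$. If $g\ne x$, then $x-g>0$, density furnishes $d\in E$ with $0<d\le x-g$, and a straightforward induction — using that $E$ is a linear subspace and that $nd+d\le g+(x-g)=x$ whenever $nd\le g$ — shows $nd\le g$ for every $n\in\mathbb N$, contradicting the Archimedean property of $E\oplus\mathbb R$. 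Hence $g=x$ and $x=\sup D_x$. This step, where the Archimedean hypothesis is genuinely used, is the only delicate point.

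It remains to transfer the supremum through the truncation. Since the infinite distributive law $z\wedge\sup_\alpha a_\alpha=\sup_\alpha(z\wedge a_\alpha)$ holds in any vector lattice (see e.g.\ \cite{positiveoperator}), we obtain
\[
\bar x=x\wedge 1=\Big(\sup_{y\in D_x}y\Big)\wedge 1=\sup_{y\in D_x}(y\wedge 1)=\sup\{\bar y:y\in E,\ 0\le y\le x\},
\]
which is exactly the claim. For the last assertion I would apply this with $x=1$ (which lies in $E\oplus\mathbb R$ and satisfies $1>0$): since the truncation on $E\oplus\mathbb R$ is the meet with $1$, we have $\bar 1=1$, and hence $\sup\{\bar y:y\in E,\ 0\le y\le 1\}=1$. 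As $\bar y=y$ for each $y\in E$ with $0\le y\le 1$, the set appearing there is contained in $\{\bar x:x\in E^+\}$, while $\bar x=x\wedge 1\le 1$ for every $x\in E^+$; therefore $\sup\{\bar x:x\in E^+\}=1$.
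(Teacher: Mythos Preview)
Your proof is correct. Both your argument and the paper's rest on the same two ingredients: the order density of $E$ in $E\oplus\mathbb R$ (from non-unitality) and an inductive bound $nd\le\cdots$ that contradicts the Archimedean property. The difference is structural. The paper argues directly on the truncated set: assuming some $z<\bar x$ majorizes every $\bar y$, it picks $u\in E$ with $0<u\le \bar x - z$, observes $\bar u=u$, and inducts to get $n\bar u\le\bar x$. You instead first prove the cleaner intermediate fact $x=\sup\{y\in E:0\le y\le x\}$ (which is really the standard statement that an order-dense ideal in an Archimedean Riesz space approximates every positive element from below), and then push the supremum through $\wedge 1$ via the infinite distributive law. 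Your route is a bit more modular and makes transparent where each hypothesis enters; the paper's is shorter but mixes the density/Archimedean step with the truncation. One small remark: in your reduction ``replace $g$ by $(g\vee 0)\wedge x$'' the $\vee 0$ is actually redundant, since $0\in D_x$ forces $g\ge 0$ already; the essential replacement is $g\mapsto g\wedge x$, and then $g\wedge x=x$ indeed yields $x\le g$.
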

\begin{proof}
	It's clear that $\bar{x}$ is an upper bound of the set $\left\{  \bar{y},y\in
	E,0\leq y\leq x\right\}  .$ Assume by contradiction that some $z\in E\oplus%
	\mathbb{R}
	$ satisfies $z<\bar{x}$ and $\bar{y}\leq z$ for all $y\in E^{+}$ such that
	$y\leq x.$ By the order denseness of $E$ in $E\oplus%
	\mathbb{R}
	$, there exist $u\in E$ with $0<u\leq\bar{x}-z\leq\bar{x}\leq x.$ Therefore,
	$\bar{u}\leq z$ and $0<2\bar{u}=\bar{u}+\bar{u}\leq\bar{x}-z+z.$ By induction
	, $0<n\bar{u}\leq\bar{x}$ for all $n\in%
	\mathbb{N}
	,$ contradicting the Archimedean property of $E\oplus%
	\mathbb{R}
	.$
	Moreover,%
	\begin{align*}
		1  & =\sup\left\{  \bar{y},y\in E,0\leq y\leq1\right\}  \\
		& =\sup\left(  \left[  0,1\right]  \cap E^{+}\right)  \\
		& =\sup\left\{  \bar{x},x\in E^{+}\right\}  .
	\end{align*}
	This completes the proof.
\end{proof}
\begin{remark}
	Similarly, an analogous result can be obtained in the unital case as follow:
	If $0<x=x_{1}+\lambda\left(  1-u\right)  \in E\oplus%
	\mathbb{R}
	\left(  1-u\right)  ,$ then%
	\begin{align*}
		\sup\left\{  \bar{y},y\in E,0\leq y\leq x\right\}    & =\left\{  \bar{y},y\in
		E,0\leq y\leq x_{1}\right\}  =\overline{x_{1}}\\
		& =x_{1}\wedge u\\
		& =x_{1}\wedge u+\lambda\left(  1-u\right)  \wedge u\text{ (since }%
		u\wedge\left(  1-u\right)  =0)\\
		& =x\wedge u
	\end{align*}
\end{remark}
\section{Relatively Uniformly Complete Property}
	In this section, we investigate the relatively uniformly completeness of the
	unitization $E \oplus\mathbb{R}$ of a truncated Riesz space $E$.
	
	Given $u\in E^{+}$,  it is said that the sequence $\left( x_{n}\right) $ in $E$ converges $u$-uniformly to the element $x\in E$, or that $x$ is an $u$-uniform limit for $\left( x_{n}\right) $,  whenever, for every $ \epsilon >0 $,  there exists a natural number $N_{\epsilon}$ such that $\mid x_{n}-x\mid \leq \epsilon u$ holds for all $n\geq N_{\epsilon}$.  It is said that the sequence $\left( x_{n}\right) $ in $E$ converges relatively uniformly to $x$ whenever $x_{n}$ converges $u$-uniformly to $x$ for some $u\in E^{+}$.
The sequence $\left(x_{n}\right) $ in $E$ is an $u$-uniform Cauchy sequence with $u>0$ if, for every $\epsilon >0$, we have $\mid x_{n}-x_{m}\mid \leq \epsilon u$ for all $n$ and $m$ sufficiently large.  The Riesz space $E$ is called uniformly complete whenever, for every $u\in E^{+}$, every $u$-uniform Cauchy sequence has an $u$-uniform limit. 
For further information regarding uniformly complete Riesz spaces, one may consult	\cite{Riesz1}.
	We begin with a characterization that connects the relative uniform
	completeness of $E \oplus\mathbb{R}$ to that of $E$ itself.
	\begin{proposition}
		Let $E$ be a truncated Riesz space. If $E \oplus\mathbb{R}$ is relatively
		uniformly complete, then $E$ is relatively uniformly complete.
	\end{proposition}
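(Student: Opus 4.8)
The plan is to lift a relatively uniform Cauchy sequence from $E$ to the unitization, take its limit there, and then observe that the limit is forced to stay inside $E$. So fix $u\in E^{+}$ and let $(x_{n})$ be a $u$-uniform Cauchy sequence in $E$. Since $E$ is a vector sublattice of $E\oplus\mathbb{R}$, the modulus $|x_{n}-x_{m}|$ computed in $E$ coincides with the one computed in $E\oplus\mathbb{R}$; hence $(x_{n})$ is also a $u$-uniform Cauchy sequence in $E\oplus\mathbb{R}$, using that $u\in E^{+}\subseteq(E\oplus\mathbb{R})^{+}$. By hypothesis $E\oplus\mathbb{R}$ is relatively uniformly complete, so there exists $x\in E\oplus\mathbb{R}$ which is a $u$-uniform limit of $(x_{n})$.

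Next I would show that in fact $x\in E$. Recall from the Introduction that $E$ is a (maximal) ideal of $E\oplus\mathbb{R}$. Given $\epsilon>0$, pick $N_{\epsilon}$ with $|x_{n}-x|\leq\epsilon u$ for all $n\geq N_{\epsilon}$, and fix one such index $n$. Since $\epsilon u\in E^{+}$ and $E$ is an ideal, the inequality $|x_{n}-x|\leq\epsilon u$ yields $x_{n}-x\in E$, and therefore $x=x_{n}-(x_{n}-x)\in E$.

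It then remains only to reinterpret the estimates $|x_{n}-x|\leq\epsilon u$ (valid for $n\geq N_{\epsilon}$) inside $E$: they say exactly that $x\in E$ is a $u$-uniform limit of $(x_{n})$ in $E$. As $u\in E^{+}$ and the $u$-uniform Cauchy sequence were arbitrary, $E$ is relatively uniformly complete. I do not expect a serious obstacle in this direction: the only ingredient beyond the definitions is the fact that $E$ sits as an ideal in $E\oplus\mathbb{R}$. The delicate part of the theory lies in the converse implication, namely transferring relative uniform completeness from $E$ up to $E\oplus\mathbb{R}$, which is not asserted in this proposition.
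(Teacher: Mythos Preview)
Your proof is correct. Both arguments share the same skeleton: lift the $u$-Cauchy sequence to $E\oplus\mathbb{R}$, invoke completeness there to get a limit, and then show the limit actually lies in $E$. The difference is in that last step. The paper writes the limit as $v=a+\alpha$ with $a\in E$ and $\alpha\in\mathbb{R}$, and uses the explicit formula for $|x_{n}-v|$ in $E\oplus\mathbb{R}$ to carry out a case analysis on $\alpha$, deriving a contradiction when $\alpha\neq 0$. You instead invoke the structural fact (Theorem~1.1(2) of the Introduction) that $E$ is an ideal in $E\oplus\mathbb{R}$: from $|x_{n}-x|\leq\epsilon u\in E$ the ideal property gives $x_{n}-x\in E$, and hence $x\in E$. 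Your route is shorter and more conceptual, and it sidesteps the somewhat delicate computation with the absolute value formula; the paper's route is more hands-on and self-contained within the section, not appealing to the ideal property established elsewhere.
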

	\begin{proof}
		Let $u\in E^{+}$ be a positive element. Consider an $u-$Cauchy sequence
		$\left(  x_{n}\right)  _{n\in%
			\mathbb{N}
		}$\ in $E.$ By definition,  this means that there exists $\epsilon >0$ and a positive integer $N_{\epsilon}$ such that for all $n,m\geq N_{\epsilon}$
		$$
		\mid x_{n}-x_{m}\mid \leq \epsilon u
		$$
		We view $\left(  x_{n}\right)  _{n\in%
			\mathbb{N}
		}$ as a sequence in  $E\oplus\mathbb{R}$ .  The space $E\oplus\mathbb{R}$ is
		assumed to be uniformly complete. Therefore, there exists $v=a+\alpha\in\left(
		E\oplus%
		\mathbb{R}
		\right)  ^{+}$ such that for $n\geq N_{\epsilon}
		$%
		\[
		\mid x_{n}-v\mid\leq\epsilon u
		\]
		We consider two cases for $\alpha:$ If $\alpha=0,$  this means the sequence
		converges to $v$ within $E.$ Otherwise,  $\alpha >0$  and for all $n\geq N_{\epsilon}
		$%
		\[
		\mid x_{n}-a\mid-2\alpha\overline{\frac{1}{\alpha}\left(  x_{n}-a\right)
			^{-}\vee\frac{-1}{\alpha}\left(  x_{n}-a\right)  ^{+}}+\alpha\leq\epsilon u
		\]
		Then $-\alpha \geq0.$ This directly contradicts our assumption for this case,  which
		was $\alpha>0.$
		Since case $2$ leads to a contradiction,  the only possibility is case $1$,
		where $\lambda =0$,  and the proof is finished.
	\end{proof}
The converse of the last result is not always valid (an example will be given
later). However,  it holds in the unital case, as stated in the next proposition.
\begin{proposition}
	Let $E$ be a truncated Riesz space. If $E$ is unital and uniformly complete,
	then $E\oplus%
	\mathbb{R}
	$ is uniformly complete.
\end{proposition}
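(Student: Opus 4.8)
The plan is to use the fact that, in the unital case, the unitization $E\oplus\mathbb{R}$ splits as a direct sum of two complementary bands, one of which is $E$ and the other a line. Let $u$ denote the truncation unit of $E$. By part (4) of the theorem recalled in the introduction, $E^{\perp}=\mathbb{R}(1-u)$ inside $E\oplus\mathbb{R}$; moreover $u\le 1$ (since $\bar{u}=u\wedge u=u$, whence $u=u\wedge 1$), so $1-u\ge 0$, and $1-u\ne 0$ because $1\notin E$, hence $1-u>0$. First I would record the coordinate description: every $z\in E\oplus\mathbb{R}$ can be written uniquely as $z=y+t(1-u)$ with $y\in E$ and $t\in\mathbb{R}$. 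Existence is just $x+\lambda=(x+\lambda u)+\lambda(1-u)$, and uniqueness follows from $E\cap E^{\perp}=\{0\}$. Since $E$ is an ideal and $E\oplus\mathbb{R}=E+E^{\perp}$, standard Riesz space theory then gives $E=E^{\perp\perp}$, so $E$ and $\mathbb{R}(1-u)$ are complementary bands; in particular the lattice operations are computed coordinatewise, and $|y+t(1-u)|=|y|+|t|(1-u)$ for all $y\in E$ and $t\in\mathbb{R}$.

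With this in hand the remainder is bookkeeping. Fix $v\in(E\oplus\mathbb{R})^{+}$ and write $v=w+s(1-u)$ with $w\in E^{+}$ and $s\ge 0$. Let $(z_{n})$ be a $v$-uniform Cauchy sequence and write $z_{n}=y_{n}+t_{n}(1-u)$. Given $\epsilon>0$ there is $N$ with $|z_{n}-z_{m}|\le\epsilon v$ for $n,m\ge N$; expanding both sides coordinatewise this reads $|y_{n}-y_{m}|+|t_{n}-t_{m}|(1-u)\le\epsilon w+\epsilon s(1-u)$, and because the two summands on each side lie in the complementary bands $E$ and $\mathbb{R}(1-u)$ the inequality separates into $|y_{n}-y_{m}|\le\epsilon w$ in $E$ and $|t_{n}-t_{m}|(1-u)\le\epsilon s(1-u)$; since $1-u>0$ the latter forces $|t_{n}-t_{m}|\le\epsilon s$. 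Thus $(y_{n})$ is a $w$-uniform Cauchy sequence in $E$ and $(t_{n})$ is a Cauchy sequence in $\mathbb{R}$. By uniform completeness of $E$ there is $y\in E$ with $y_{n}\to y$ $w$-uniformly, and $t_{n}\to t$ for some $t\in\mathbb{R}$; reversing the estimate above, and using $|z_{n}-(y+t(1-u))|=|y_{n}-y|+|t_{n}-t|(1-u)$, shows that $z_{n}\to y+t(1-u)$ $v$-uniformly. Hence $E\oplus\mathbb{R}$ is uniformly complete.

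I expect the only genuinely delicate point to be the identification of $E\oplus\mathbb{R}$ with the band direct sum $E\oplus\mathbb{R}(1-u)$: it is this decomposition, rather than the original coordinates $(x,\lambda)$, that is compatible with the lattice operations, and it is precisely what renders the complicated formula for the absolute value in $E\oplus\mathbb{R}$ irrelevant to the argument. Once it is in place, uniform completeness reduces to combining limits from the two uniformly complete summands $E$ and $\mathbb{R}$, which is routine; the coordinatewise lattice identities and the separation of disjoint summands are standard and may be quoted from, e.g., \cite{positiveoperator}.
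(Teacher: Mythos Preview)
Your proof is correct and follows essentially the same route as the paper: decompose $E\oplus\mathbb{R}$ as the band direct sum $E\oplus E^{d}=E\oplus\mathbb{R}(1-u)$, split a $v$-Cauchy sequence and the regulator $v$ into their $E$- and $E^{d}$-components, invoke uniform completeness of each summand, and recombine. Your write-up is in fact more explicit than the paper's in justifying the band decomposition and the coordinatewise absolute value, which is exactly the step that makes the argument work.
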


\begin{proof}
	The truncation in $E$ will be defined by met of $u\in E^{+},$ $\bar{%
		x}=x\wedge u$ for all $x\in E^{+}.$
	Consider a sequence $\left( x_{n}\right) _{n\in 
		\mathbb{N}
	}\in E\oplus E^{d},$ where each element $x_{n}$ is represented as $%
	x_{n}=a_{n}+b_{n\text{ }}$ with $a_{n}\in E$ and $b_{n}\in E^{d}.$ Assume $%
	\left( x_{n}\right) _{n\in 
		\mathbb{N}
	}$ is a $v-$Cauchy sequence for some $v=a+\lambda \left( 1-u\right) \in
	E\oplus E^{d}$ \ where $a\in E^{+}$ and $\lambda \in 
	\mathbb{R}
	^{+}.$ By the definition of a $v$-Cauchy sequence,  there exists $\epsilon >0$ such that 
	\begin{equation*}
		\mid x_{n}-x_{m}\mid =\mid a_{n}-a_{m}+b_{n}-b_{m}\mid \leq \epsilon v 
	\end{equation*}
	
	Using the decomposition property \cite[Theorem 1.13]{positiveoperator},  the last inequality implies that the components satisfy the following
	bounds for $n$ and $m$ large enough
	\begin{equation*}
		\mid a_{n}-a_{m}\mid \leq \epsilon  a\text{ and }\mid b_{n}-b_{m}\mid
		\leq \epsilon \lambda \left( 1-u\right) 
	\end{equation*}%
	These inequalities show that $\left( a_{n}\right) _{n\in 
		\mathbb{N}
	}$ is an $a$-Cauchy sequence in $E$ and $\left( b_{n}\right) _{n\in 
		\mathbb{N}
	}$ is a $\lambda (1-u)$-Cauchy sequence in $E^{d}=%
	\mathbb{R}
	\left( 1-u\right) .$ Thus, there exist elements $x\in E$ and $y\in E^{d}$
	such that for all $n$ sufficiently large
	\begin{equation*}
		\mid a_{n}-x\mid \leq \epsilon a\text{ and }\mid b_{n}-y\mid \leq
		\epsilon \lambda \left( 1-u\right) 
	\end{equation*}%
	Therefore,  by combining these inequalities and applying the triangle inequality
	for the absolute value in $E\oplus E^{d}$, we obtain:%
	\begin{equation*}
		\mid x_{n}-(x+y)\mid \leq \epsilon v
	\end{equation*}%
	This demonstrates that the $v$-Cauchy sequence $\left( x_{n}\right) _{n\in 
		\mathbb{N}
	}$ converges $v$-uniformly to $x+y$ in $E\oplus 
	\mathbb{R}
	$
\end{proof}
\begin{example}
	Let $E=c_{00}$ equipped with the pointwise order. For $u=\left( u_{n}\right)
	_{n\in 
		\mathbb{N}
	}\in c_{00},$ let $\bar{u}=u\wedge 1,$ where $1$ is the constant sequence
	equal $1.$ We claim that E is relatively uniformly complete, but $E\oplus 
	\mathbb{R}
	$ is not relatively uniformly complete.
	
	\begin{enumerate}
		\item E is relatively uniformly complete.
		
		Indeed,  consider an arbitrary relatively uniform Cauchy sequence $\left(
		u_{n}\right) _{n\in 
			\mathbb{N}
		}\in E.$  There exists an element $v\in E^{+}$ such that for every $\epsilon>0$ and for all integer $n$ and $m$ large enough 
		$
		,\mid u_{n}-u_{m}\mid \leq \epsilon v.$ Let $u_{n}$  be represented by its
		components $u_{n}=\left( a_{n,k}\right) _{k\in 
			\mathbb{N}
		}$ $\ $and $v=\left( v_{k}\right) _{k\in 
			\mathbb{N}
		}$.  The given inequality is equivalent to  
		\begin{equation*}
			\forall k\in 
			\mathbb{N}
			,\mid a_{n,k}-a_{m,k}\mid \leq \epsilon v_{k}
		\end{equation*}
		
		For each fixed $k$,  the sequence of components $\left( a_{n,k}\right) _{n\in 
			\mathbb{N}
		}$ forms a Cauchy sequence in $%
		\mathbb{R}
		.$ Since $%
		\mathbb{R}
		$ is complete, this sequence converges to a limit, which we denote by $%
		c_{k}\in 
		\mathbb{R}
		$. Let $c=\left( c_{k}\right) _{k\in 
			\mathbb{N}
		}$ be the sequence of these limits.  As $v\in E,$ its support is finite.  This
		means there exists an integer $k_{0}$ such that $v_{k}=0$ for all $k
		>k_{0}.$ Hence,  for $k>k_{0},$ $a_{n,k}=a_{N_{\epsilon},k},$ for all $n\geq N_{\in} $ which implies that $%
		c_{k}=a_{N_{\epsilon},k}=0$ for $k$ large enough. This demonstrates that the limit
		sequence $c$ has finite support, confirming that $c\in E.$ Obviously,  the
		sequence $\left( u_{n}\right) _{n\in 
			\mathbb{N}
		}$ converge $v$-uniformly to $c$ in $E.$
		
		\item $E\oplus 
		\mathbb{R}
		$ is not relatively uniformly complete.
		
		Now, put for all $n\in 
		\mathbb{N}
		,$ $u_{n}=\left( a_{n,k}\right) _{k\in 
			\mathbb{N}
		}$ the sequence in $E$ defined by%
		\begin{equation*}
			a_{n,k}=\left\{ 
			\begin{array}{l}
				\frac{1}{k}\text{ if }0\leq k\leq n \\ 
				0\,\ \text{if }k>n%
			\end{array}%
			\right. 
		\end{equation*}%
		Let $\epsilon >0.$ There exist some $n(\epsilon )\in 
		\mathbb{N}
		$ such that%
		\begin{equation*}
			\frac{1}{k}\leq \epsilon \text{ for all }k\geq n(\epsilon )
		\end{equation*}%
		For all $n\geq m\geq n(\epsilon );$%
		\begin{equation*}
			\overline{\frac{1}{\epsilon }\left\vert u_{n}-u_{m}\right\vert }=\frac{1}{%
				\epsilon }\left\vert u_{n}-u_{m}\right\vert 
		\end{equation*}%
		and consequently, $\left\vert u_{n}-u_{m}\right\vert \leq \epsilon .1.$ We
		deduced that the sequence $(u_{n})_{n\in 
			\mathbb{N}
		}$ is $\ 1$-uniform cauchy in $E\oplus 
		\mathbb{R}
		$. If the sequence $\left( u_{n}\right) _{n\in n}$ converge relatively
		uniformly to $u+\lambda $ in $E\oplus 
		\mathbb{R}
		$ with $u=\left( x_{1},...,x_{n_{0}},0,...\right)\in E ,$ then for $n$ large
		enough%
		\begin{equation*}
			\left\vert u_{n}-u-\lambda \right\vert \leq \epsilon .1.
		\end{equation*}%
		If $\lambda \neq 0,$ then%
		\begin{equation*}
			\left\vert u_{n}-u\right\vert -2\left\vert \lambda \right\vert \overline{%
				\left( \frac{1}{\lambda }\left( u_{n}-u\right) ^{-}\vee -\frac{1}{\lambda }%
				\left( u_{n}-u\right) ^{+}\right) }+\left\vert \lambda \right\vert \leq
			\epsilon .1
		\end{equation*}%
		Which yield to the following contradiction 
		\begin{equation*}
			\forall \epsilon >0,\epsilon \geq \left\vert \lambda \right\vert >0.
		\end{equation*}%
		Hence, $\lambda =0$ and 
		\begin{equation*}
			\forall n\geq n_{1}>n_{0},\left\vert u_{n}-u\right\vert \leq \epsilon .1.
		\end{equation*}%
		So, 
		\begin{equation*}
			\forall n\geq n_{1}>n_{0},\overline{\frac{1}{\epsilon }\left\vert
				u_{n}-u\right\vert }=\frac{1}{\epsilon }\left\vert u_{n}-u\right\vert 
		\end{equation*}%
		Then, 
		\begin{equation*}
			\forall n\geq n_{1}>n_{0},\forall k\in 
			\mathbb{N}
			,\left\vert u_{n,k}-u_{k}\right\vert \leq \epsilon .
		\end{equation*}%
		In particulary,%
		\begin{equation*}
			\frac{1}{n_{0}+1}=\left\vert u_{n_{0}+1,n_{0}+1}-u_{n_{0}+1}\right\vert \leq
			\epsilon 
		\end{equation*}%
		which is a contradiction.
	\end{enumerate}
\end{example}
\section{Dedekind Completeness}
This section is dedicated to examining the Dedekind completeness of the
unitization $E\oplus 
\mathbb{R}
$ of a truncated Riesz space $E$. We recall that a Riesz space is Dedekind
complete if every nonempty subset that is bounded above admits a supremum.

The next lemma characterise Dedekind complete Riesz spaces.
\begin{lemma}\cite[Lemma 1.39]{locallysolid}
	A Riesz space $E$ is Dedekind complete if and only if for every net $\left(
	u_{\alpha }\right) $ in $E$ satisfying $0\leq u_{\alpha }\uparrow \leq v$ in 
	$E$ we have $u_{\alpha }\uparrow u$ for some $u.$
\end{lemma}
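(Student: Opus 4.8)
Since the statement is quoted as \cite[Lemma 1.39]{locallysolid}, the quickest ``proof'' is a citation; but the argument is short, so I sketch it. The plan is to treat the two implications separately, the forward one being essentially a restatement of definitions and the reverse one requiring a reduction from arbitrary bounded sets to increasing nets.

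For the direction ``Dedekind complete $\Rightarrow$ the net condition'': given a net with $0\le u_\alpha\uparrow\le v$, the family $\{u_\alpha\}$ is nonempty and bounded above by $v$, hence admits a supremum $u$ by Dedekind completeness; since the net is increasing with least upper bound $u$, this is by definition the assertion $u_\alpha\uparrow u$. Nothing further is needed here.

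For the converse, the key move is to realize the supremum of an arbitrary bounded-above set as the order limit of a net of finite suprema. Let $A\subseteq E$ be nonempty and bounded above by $w$, let $\mathcal{F}$ be the set of finite subsets of $A$ directed by inclusion, and put $s_F=\sup F$ (finite suprema exist because $E$ is a lattice). To meet the positivity requirement in the hypothesis, fix $a_0\in A$ and pass to the net $u_F=s_{F\cup\{a_0\}}-a_0$, which is increasing and satisfies $0=u_{\{a_0\}}\le u_F\le w-a_0$. The hypothesis then yields $u_F\uparrow u$ for some $u\in E$, and one checks that $u+a_0=\sup A$: it dominates each $a\in A$ because $a\le a\vee a_0=u_{\{a\}}+a_0\le u+a_0$, and it lies below every upper bound $z$ of $A$ because such a $z$ dominates each $s_{F\cup\{a_0\}}$, hence $z-a_0\ge u_F$ for all $F$ and therefore $z-a_0\ge u$.

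The step needing the most care is the shift by $a_0$, introduced solely to satisfy the positivity clause in the hypothesis; apart from that piece of bookkeeping the argument is routine lattice manipulation, and I anticipate no genuine obstacle.
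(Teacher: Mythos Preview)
Your proof is correct. The paper does not give its own proof of this lemma at all; it merely states the result and cites \cite[Lemma~1.39]{locallysolid}, so there is no in-paper argument to compare against. Your sketch is a standard and complete justification, and the shift by $a_0$ to meet the positivity hypothesis is exactly the right bookkeeping.
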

We introduce another lemma that will be needed in the proof of the main theorem of this section, which is devoted to adapting the decomposition property in Riesz spaces to the setting of increasing nets.
\begin{lemma}
Let $ \left(x_{\alpha }\right) $ a net  in a Riesz space satisfying $ 0\leq x_{\alpha }\uparrow \leq u+v$ , then there exists a net $\left(u_{\alpha }\right) $ and $\left(v_{\alpha }\right) $ satisfying $x_{\alpha }=u_{\alpha }+v_{\alpha }$ , $0\leq u_{\alpha } \uparrow \leq \mid u\mid$ and $ 0\leq v_{\alpha }\uparrow \leq \mid v\mid $.
\end{lemma}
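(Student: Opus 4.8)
The plan is to give the decomposition \emph{explicitly} rather than quoting the scalar Riesz decomposition theorem directly, because we need the two resulting nets to inherit the monotonicity of $(x_\alpha)$, which a bare existence statement would not provide. First I would observe that $0\le x_\alpha\le u+v\le |u|+|v|$, so there is no loss of generality in assuming the upper bound is $|u|+|v|$ with $|u|,|v|\ge 0$; this also frees us from assuming $u,v$ positive. Then I set
\[
u_\alpha:=x_\alpha\wedge|u|\qquad\text{and}\qquad v_\alpha:=x_\alpha-u_\alpha .
\]

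Next I would record the identity $v_\alpha=x_\alpha-x_\alpha\wedge|u|=(x_\alpha-|u|)^{+}$, valid in any Riesz space, which immediately gives $v_\alpha\ge 0$ and $u_\alpha+v_\alpha=x_\alpha$. The bound $0\le u_\alpha\le|u|$ is clear. For the bound on $v_\alpha$, from $x_\alpha\le|u|+|v|$ we get $x_\alpha-|u|\le|v|$, and since $0\le|v|$ as well, taking positive parts yields $v_\alpha=(x_\alpha-|u|)\vee 0\le|v|\vee 0=|v|$.

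Then I would verify that both nets are increasing. Since $(x_\alpha)$ is increasing and the maps $t\mapsto t\wedge|u|$ and $t\mapsto(t-|u|)^{+}$ are order preserving (using $a\le b\Rightarrow a\wedge c\le b\wedge c$ and $a\le b\Rightarrow a\vee c\le b\vee c$), the relation $\alpha\le\beta$ forces $u_\alpha\le u_\beta$ and $v_\alpha\le v_\beta$. Hence $0\le u_\alpha\uparrow\le|u|$ and $0\le v_\alpha\uparrow\le|v|$, which is exactly the assertion, and this is the form in which it will be fed into Lemma~\ref{}, together with the preceding Dedekind-completeness criterion, in the next theorem.

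I do not anticipate a real obstacle here; the only points requiring a little care are (i) passing to $|u|,|v|$ at the outset so that $u$ and $v$ need not be positive, and (ii) the bound $v_\alpha\le|v|$, which must be obtained by combining the hypothesis $x_\alpha\le|u|+|v|$ with the lattice identity $x_\alpha-x_\alpha\wedge|u|=(x_\alpha-|u|)^{+}$, i.e.\ one should not try to argue with $v$ in place of $|v|$.
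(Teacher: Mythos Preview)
Your proof is correct and is essentially the paper's argument: the paper defines $u_\alpha=[x_\alpha\vee(-|u|)]\wedge|u|$ and $v_\alpha=x_\alpha-u_\alpha$, which, because $x_\alpha\ge 0$ already forces $x_\alpha\vee(-|u|)=x_\alpha$, reduces exactly to your choice $u_\alpha=x_\alpha\wedge|u|$, $v_\alpha=(x_\alpha-|u|)^{+}$. The verification of the bounds and of monotonicity via order-preservation of $t\mapsto t\wedge|u|$ and $t\mapsto(t-|u|)^{+}$ matches the paper's reasoning.
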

\begin{proof}
Inspired by the classical proof of the decomposition property in Riesz spaces, let us consider $u_{\alpha }=\left[x_{\alpha } \vee\left(-\left|u\right|\right)\right] \wedge\left|u\right|$, and observe that $0\leq u_{\alpha } \leq\left|u\right|.$ Now put $v_{\alpha }=x_{\alpha }-u_{\alpha }$ and observe that

$$
v_{\alpha }=x_{\alpha }-\left[x_{\alpha } \vee\left(-\left|u\right|\right)\right] \wedge\left|u\right|=\left[0 \wedge\left(x_{\alpha }+\left|u\right|\right)\right] \vee\left(x_{\alpha }-\left|u\right|\right)
$$

On the other hand,  $0\leq  x_{\alpha } \leq\left|u\right|+\left|v\right|$, from which it follows that

$$
0\leq  v_{\alpha } \leq 0 \vee\left(x_{\alpha }-\left|u\right|\right) \leq\left|v\right|
$$

Thus, $0\leq v_{\alpha } \leq\left|v\right|$ also holds.
Since $\left(x_{\alpha }\right)$ is increasing and our attention is focused on expressions $$u_{\alpha }=\left[x_{\alpha } \vee\left(-\left|u\right|\right)\right] \wedge\left|u\right|$$ and $$
v_{\alpha }=\left[0 \wedge\left(x_{\alpha }+\left|u\right|\right)\right] \vee\left(x_{\alpha }-\left|u\right|\right),
$$we can conclude that the two nets  $\left(u_{\alpha }\right) $  and $\left(v_{\alpha }\right)$ are also increasing. This completes the proof.
\end{proof}
In general,  when the suprema exists,  the supremum of the sum of two nets may differ from the sum of their individual suprema. However,  in the case of increasing nets,  the equality holds. This is stated and proved in the next lemma.
\begin{lemma}
Let $(x_\alpha)_{\alpha \in A}$ and $(y_\alpha)_{\alpha \in A}$ be two increasing nets indexed by the same directed set $A$,  and suppose that $\sup x_\alpha$ and $\sup y_\alpha$ exist. Then,  the supremum of the net $(x_\alpha + y_\alpha)_{\alpha \in A}$ exists,  and we have:
\[
\sup_{\alpha \in A}(x_\alpha + y_\alpha) = \sup_{\alpha \in A} x_\alpha + \sup_{\alpha \in A} y_\alpha.
\]
\end{lemma}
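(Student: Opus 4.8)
The plan is to write $x := \sup_{\alpha} x_\alpha$ and $y := \sup_{\alpha} y_\alpha$ and to prove that $x+y$ is the least upper bound of the net $(x_\alpha + y_\alpha)_{\alpha\in A}$. The first step is trivial: for every $\alpha \in A$ we have $x_\alpha \le x$ and $y_\alpha \le y$, hence $x_\alpha + y_\alpha \le x+y$, so $x+y$ is an upper bound and in particular it will suffice to prove minimality in order to get both the existence and the value of the supremum.

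So let $z$ be an arbitrary upper bound of $(x_\alpha + y_\alpha)_{\alpha\in A}$; the aim is to show $x+y \le z$. The crucial move — and the only place where the hypotheses that the nets are increasing and share the directed index set $A$ are actually used — is to decouple the two indices. Fix $\beta \in A$ and take an arbitrary $\alpha \in A$. By directedness of $A$ choose $\gamma \in A$ with $\gamma \ge \alpha$ and $\gamma \ge \beta$; since both nets are increasing, $x_\alpha \le x_\gamma$ and $y_\beta \le y_\gamma$, and therefore
\[
x_\alpha + y_\beta \le x_\gamma + y_\gamma \le z ,
\]
i.e. $x_\alpha \le z - y_\beta$. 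As $\alpha$ was arbitrary, $z - y_\beta$ is an upper bound of $(x_\alpha)_{\alpha\in A}$, so $x \le z - y_\beta$, that is, $x + y_\beta \le z$.

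Finally I would repeat the argument one level up: the inequality $y_\beta \le z - x$ now holds for every $\beta \in A$, so $z - x$ is an upper bound of $(y_\beta)_{\beta\in A}$ and hence $y \le z - x$, which yields $x + y \le z$. Thus $x+y$ is the least upper bound of $(x_\alpha + y_\alpha)_{\alpha\in A}$, proving both its existence and the claimed formula. I do not expect a real obstacle here; the only point requiring care is that one may \emph{not} assert $x_\alpha + y_\beta \le z$ directly for unrelated indices $\alpha,\beta$ — the detour through a common upper bound $\gamma$, combining directedness with monotonicity of the two nets, is what makes the decoupling legitimate.
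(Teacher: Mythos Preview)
Your proof is correct and follows essentially the same route as the paper's: both set $x=\sup x_\alpha$, $y=\sup y_\alpha$, check that $x+y$ is an upper bound, and then, given an arbitrary upper bound, use directedness and monotonicity to pass through a common index $\gamma$ so as to decouple the two indices and conclude $x+y$ is least. The arguments are virtually identical up to notation.
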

\begin{proof}
Let $x := \sup_{\alpha \in A} x_\alpha$ and $y := \sup_{\alpha \in A} y_\alpha$.

Define the net $(z_\alpha)_{\alpha \in A}$ by $z_\alpha := x_\alpha + y_\alpha$.  Since both $(x_\alpha)$ and $(y_\alpha)$ are increasing,  and addition is order-preserving, the net $(z_\alpha)$ is also increasing.\\
For all $\alpha \in A$,  we have:
$z_\alpha = x_\alpha + y_\alpha \leq x + y$ , so $x + y$ is an upper bound for the net $(z_\alpha)$.

Now,  let us show that $x + y$ is the least upper bound of the net $(z_\alpha)_{\alpha \in A}$.
Suppose that \( u \) is any upper bound of the net \( (z_\alpha) \),  where \( z_\alpha = x_\alpha + y_\alpha \).  Let \( \alpha, \beta \in A \).  Since \( A \) is directed,  there exists some \( \gamma \in A \) such that \( \gamma \geq \alpha \) and \( \gamma \geq \beta \). 

Because both \( (x_\alpha) \) and \( (y_\alpha) \) are increasing nets,  we have:
$$x_\beta + y_\alpha \leq x_\gamma + y_\gamma = z_\gamma \leq u.$$

Therefore,
$x_\beta \leq u - y_\alpha.$

Since this inequality holds for all \( \beta \),  we obtain:
$x \leq u - y_\alpha  \quad \text{for all } \alpha \in A.$

Thus,  \( y_\alpha \leq u - x \) for all \( \alpha \in A \).
Taking the supremum over \( \alpha \),  it follows that:
$y \leq u - x.$

Adding \( y \) to both sides gives:
$x + y \leq u.$

Therefore,  \( x + y \) is less than or equal to any upper bound \( u \) of the net \( (z_\alpha) \),  and since it is also an upper bound itself,  it is the least upper bound. Hence,
$\sup_{\alpha \in A}(x_\alpha + y_\alpha) = x + y.$
\end{proof}
An important consequence of $E$ being an ideal in $E\oplus 
\mathbb{R}
$ is that the Dedekind completeness of $E\oplus 
\mathbb{R}
$ implies the Dedekind completeness of $E$. 
We start our investigation by examining the supremum of a subset of $E$ from
the perspective of it being a subset of $E\oplus 
\mathbb{R}
.$ Details are given in the following lemma.
\begin{lemma}
	Let $E$ be a truncated vector lattice and $A$ a non-empty subset of $E$. If $%
	a_{0}=sup(A)$ exists in $E$, then $a_{0}$ is also the supremum of $A$ when
	considered as a subset of $E\oplus 
	\mathbb{R}
	.$
\end{lemma}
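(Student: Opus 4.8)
The plan is to use the structural fact recorded in Theorem 1.1(2): $E$ is a (maximal) ideal of $E\oplus\mathbb{R}$, hence in particular a solid vector sublattice. This is the only non-trivial ingredient; the rest is the standard argument showing that a supremum computed inside an ideal is still a supremum in the ambient space. Concretely, given an arbitrary upper bound of $A$ in $E\oplus\mathbb{R}$, I would truncate it against $a_0$ to manufacture an upper bound that still lies in $E$, and then invoke the fact that $a_0$ is already the \emph{least} upper bound of $A$ in $E$.

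First I would observe that $a_0$ is an upper bound of $A$ in $E\oplus\mathbb{R}$, since $E$ is a vector sublattice of $E\oplus\mathbb{R}$ and so its order is just the restriction of the ambient one. It then remains to show that any upper bound $b\in E\oplus\mathbb{R}$ of $A$ satisfies $a_0\leq b$. Fix some $a_1\in A$ (possible since $A\neq\emptyset$). For every $a\in A$ one has $a\leq b$ and $a\leq a_0$, hence $a\leq b\wedge a_0$; thus $b\wedge a_0$ is again an upper bound of $A$, and moreover $a_1\leq b\wedge a_0\leq a_0$. The key step is to deduce from this that $b\wedge a_0\in E$: writing $c=b\wedge a_0-a_1$ we get $0\leq c\leq a_0-a_1$ with $a_0-a_1\in E^{+}$, and solidity of $E$ in $E\oplus\mathbb{R}$ gives $c\in E$, whence $b\wedge a_0=c+a_1\in E$.

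Now $b\wedge a_0$ is an upper bound of $A$ lying in $E$, so by the defining property of $a_0=\sup(A)$ in $E$ we obtain $a_0\leq b\wedge a_0$; combined with $b\wedge a_0\leq a_0$ this forces $b\wedge a_0=a_0$, i.e.\ $a_0\leq b$. Hence $a_0$ is the least upper bound of $A$ in $E\oplus\mathbb{R}$. The only place where more than elementary lattice arithmetic is needed is the passage $b\wedge a_0\in E$, which is precisely where the ideal/solidity property from Theorem 1.1(2) is indispensable — without it $b\wedge a_0$ need not return to $E$ and the argument would collapse — so I would flag that as the (mild) crux and expect no genuine obstacle beyond it.
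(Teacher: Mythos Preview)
Your argument is correct. Both you and the paper rely on Theorem~1.1(2) (that $E$ is an ideal in $E\oplus\mathbb{R}$) and both proceed by manufacturing from an arbitrary upper bound $b\in E\oplus\mathbb{R}$ a new upper bound of $A$ that actually lies in $E$, then invoking the minimality of $a_0$ there. The difference is in how this auxiliary upper bound is built: the paper decomposes $b=x+\alpha$ explicitly and constructs $x+(a_0-x)^{+}\wedge\alpha$, checking by hand that it dominates each $a\in A$ and that $(a_0-x)^{+}\wedge\alpha\in E$ via the ideal property; you instead take $b\wedge a_0$ directly, sandwich it as $a_1\leq b\wedge a_0\leq a_0$ for a fixed $a_1\in A$, and use solidity on $b\wedge a_0-a_1$. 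Your route is the standard, coordinate-free proof that suprema computed in an ideal persist in the ambient Riesz space, and avoids any use of the concrete $E\oplus\mathbb{R}$ description; the paper's version is more computational but makes the role of the scalar part $\alpha$ explicit. Either way the crux is the same solidity step, and there is no gap in your proposal.
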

\begin{proof}
	Let $x+\alpha \in E\oplus 
	\mathbb{R}
	$ be an upper bound of $A$.  The goal is to show that $a_{0}\leq x+\alpha $.
	This is clear if $\alpha =0$. Otherwise,  for all $a\in A$,  we have $a\leq
	x+\alpha $.  This implies $\alpha >0$  and  $a-x\leq \alpha $,  which leads to 
	\begin{eqnarray*}
		a &= &x+(a-x)\wedge \alpha \leq x+(a_{0}-x)\wedge \alpha  \\
		&\leq &x+(a_{0}-x)^{+}\wedge \alpha .
	\end{eqnarray*}
	Since $E$ is an ideal in $E\oplus 
	\mathbb{R}
	$ and $(a_{0}-x)^{+}\in E$ it follows that $(a_{0}-x)^{+}\wedge \alpha \in E $.
	Consequently,  $x+(a_{0}-x)^{+}\wedge \alpha $ is an upper bound of $A$ in $%
	E.$ Therefore,  
	\begin{equation*}
		a_{0}\leq x+(a_{0}-x)^{+}\wedge \alpha .
	\end{equation*}
	The latter inequality entails $(a_{0}-x)^{+}=(a_{0}-x)^{+}\wedge \alpha .$
	Hence, 
	\begin{equation*}
		a_{0}=x+a_{0}-x\leq x+(a_{0}-x)^{+}\wedge \alpha \leq x+\alpha .
	\end{equation*}
	completing the proof.
\end{proof}
The next theorem gives us the relationship between the Dedekind completeness of $E$ and $E\oplus 
\mathbb{R}
$ .
\begin{theorem}
	Let $E$ be a truncated vector lattice. Then, $E\oplus 
	\mathbb{R}
	$ is Dedekind complete if and only if $E$ is Dedekind complete and satisfies
	the following property%
	\begin{equation*}
		\left( \ast \right) \text{ Every bouned set in }\bar{E}\text{ has a supremum
			in }E\oplus 
		\mathbb{R}
		.
	\end{equation*}
\end{theorem}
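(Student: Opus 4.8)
The plan is to prove the two directions separately, treating the unital case of the converse on its own. For the forward direction I would argue that if $E\oplus\mathbb{R}$ is Dedekind complete, then $E$ is Dedekind complete because it is an ideal in $E\oplus\mathbb{R}$ (as already noted above), while property $(\ast)$ is immediate: $\bar E$ sits inside $E\oplus\mathbb{R}$, and in a Dedekind complete space every bounded-above set has a supremum.

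For the converse, assume $E$ is Dedekind complete and $(\ast)$ holds. If $E$ is unital with truncation unit $u$, then $E\oplus\mathbb{R}$ is the order direct sum of the complementary bands $E$ and $E^{\perp}=\mathbb{R}(1-u)$ (by the first theorem); both are Dedekind complete ($\mathbb{R}(1-u)$ being $\{0\}$ or order-isomorphic to $\mathbb{R}$), hence so is $E\oplus\mathbb{R}$, and here $(\ast)$ is not even used. So from now on I assume $E$ non-unital, which is the setting in which the approximation results of Section~3 apply. A first, small step is to extract $(\tau_3)$ from $(\ast)$: if $0\ne x\in E^{+}$ had $\overline{nx}=nx$ for all $n$, then, since $\bar E=\{y:|y|\le 1 \text{ in } E\oplus\mathbb{R}\}$, the set $\{nx:n\ge 1\}$ would be a bounded-above subset of $\bar E$, hence would have a supremum $s$ by $(\ast)$; but then $s-x$ is again an upper bound of $\{nx:n\ge 1\}$, forcing $s\le s-x$ and $x=0$, a contradiction. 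Since a Dedekind complete space is Archimedean, the Archimedean characterisation of Section~3 now gives that $E\oplus\mathbb{R}$ is Archimedean---exactly the hypothesis needed below.

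By the net characterisation of Dedekind completeness it suffices to show that every net $0\le w_\alpha\uparrow\le w$ in $E\oplus\mathbb{R}$ has a supremum. If $w\in E$, then all $w_\alpha$ lie in the ideal $E$, the net converges up to a supremum in $E$, and by the lemma on suprema of subsets of $E$ this is also its supremum in $E\oplus\mathbb{R}$. Otherwise $w=x+\lambda$ with $\lambda>0$, and after scaling by $1/\lambda$ I may assume $w=x+1$, so that $w_\alpha\le w\le x^{+}+1$. The net decomposition lemma, applied with the positive elements $x^{+}$ and $1$, produces increasing nets with $w_\alpha=p_\alpha+q_\alpha$, $0\le p_\alpha\uparrow\le x^{+}$ and $0\le q_\alpha\uparrow\le 1$. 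As $x^{+}\in E^{+}$ and $E$ is an ideal, $(p_\alpha)$ lies in $E$ and converges up to some $p\in E$, which is also $\sup_{E\oplus\mathbb{R}}p_\alpha$. For $(q_\alpha)$: from $q_\alpha\le 1$ we get $\overline{q_\alpha}=q_\alpha$, so the approximation theorem of Section~3 gives $q_\alpha=\sup\{\bar y:y\in E,\ 0\le y\le q_\alpha\}$; the set $S:=\bigcup_\alpha\{\bar y:y\in E,\ 0\le y\le q_\alpha\}$ is a bounded-above subset of $\bar E$, so $\sigma:=\sup S$ exists by $(\ast)$, and one checks directly that $\sigma=\sup_\alpha q_\alpha$---each $q_\alpha\le\sigma$ since $\sigma$ dominates the set defining $q_\alpha$, and any upper bound of $\{q_\alpha\}$ dominates $S$ and hence $\sigma$. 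Finally the lemma on sums of increasing nets yields $w_\alpha=p_\alpha+q_\alpha\uparrow p+\sigma$, which completes the argument.

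I expect the one genuine obstacle to be the component $(q_\alpha)$: this net need not lie in $E$---its ``$\mathbb{R}$-part'' is in general nonzero---so neither the Dedekind completeness of $E$ nor the ideal property controls it directly, and one must instead approximate $q_\alpha$ from below by fixed points of the truncation coming from $E$ and then invoke $(\ast)$. A secondary subtlety is that this approximation step needs $E\oplus\mathbb{R}$ Archimedean, which is why $(\tau_3)$ has to be extracted from $(\ast)$ first; the forward direction, the unital case, and the bookkeeping with the two decomposition lemmas are all routine.
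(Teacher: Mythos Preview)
Your proof is correct, but it takes a genuinely different route from the paper in the converse direction. The paper does not split into unital and non-unital cases and never invokes the Archimedean property of $E\oplus\mathbb{R}$ or Theorem~3.3. Instead, after the same net decomposition $x_i=a_i+b_i$ with $0\le a_i\uparrow\le |x|$ and $0\le b_i\uparrow\le 1$, the paper writes each $b_i$ uniquely as $c_i+\gamma_i$ with $c_i\in E$ and $\gamma_i\in\mathbb{R}$, observes that $\gamma_i\in[0,1]$ and $c_i\in\bar E$, and then applies $(\ast)$ directly to the set $\{c_i\}$ and ordinary completeness of $\mathbb{R}$ to $\{\gamma_i\}$; Lemma~5.3 finishes.

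What each approach buys: the paper's argument is shorter and needs no preliminary extraction of $(\tau_3)$, no case split, and no appeal to the density/approximation result of Section~3. Your argument, by contrast, replaces the somewhat delicate claim that the components $(c_i)$ and $(\gamma_i)$ are \emph{separately} increasing (needed for Lemma~5.3 to apply termwise) with the transparent device of approximating each $q_\alpha$ from below by elements $\bar y$ with $y\in E$, pooling them into a single subset $S\subseteq\bar E$, and invoking $(\ast)$ once. The extra cost is having to establish that $E\oplus\mathbb{R}$ is Archimedean, but your derivation of $(\tau_3)$ from $(\ast)$ is clean and of independent interest.
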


\begin{proof}
	Clearly, if $E\oplus 
	\mathbb{R}
	$ is Dedekind complete, then $E$ is Dedekind complete. Furthermore, 
	property $\left( \ast \right) $ is satisfied since any bounded set in $\bar{E%
	}$ is also bounded in $E\oplus 
	\mathbb{R}
	$ by $1$, and thus its supremum exists in $E\oplus 
	\mathbb{R}
	$ due to the Dedekind completeness of $E\oplus 
	\mathbb{R}
	$.
	
	Conversly, let $\left( x_{i}\right) _{i\in I}$ be a net in $E\oplus 
	\mathbb{R}
	$ such that $0\leq x_{i}\uparrow \leq x+\lambda ,$ where $x+\lambda \in
	E\oplus 
	\mathbb{R}
	.$ This immediately implies $\lambda \geq 0.$
	
	Case $1:$ $\lambda =0.$ In this case $0\leq x_{i}\leq x,$ so the net $\left(
x_{i}\right) _{i\in I}$ is in $E$. By the Dedekind completeness of $E$ and Lemma $ 5.4 $,  we have $0\leq x_{i}\uparrow u$ in $E\oplus 
\mathbb{R}
$ for some $u\in E.$
	
	Case $2:\lambda >0.$ Without loss of generality, we can assume that \(\lambda = 1\). Using the Riesz decomposition property for increasing nets, as established in one of the preceding lemmas,  in the space \(E \oplus \mathbb{R}\), we then obtain the existence of two nets \((a_i)\) and \((b_i)\) such that
$x_i = a_i + b_i, \quad a_i \uparrow \leq |x|, \quad \text{and} \quad b_i \uparrow \leq 1.$

Since \(E\) is Dedekind complete, and the net \((a_i)_{i \in I}\) is bounded in \(E\), the supremum \(a = \sup a_i\) exists in \(E\).

Moreover, each \(b_i\) can be written as \(b_i = c_i + \gamma_i\), where \(c_i \in E\) and \(\gamma_i \in \mathbb{R}\), both forming increasing nets. From the inequality
$0 \leq b_i = c_i + \gamma_i \leq 1,$
it follows that \(\gamma_i \in [0,1]\) and \(c_i \in \overline{E}\).

Then, by the Dedekind completeness of \(\mathbb{R}\) and property \((\ast)\), both suprema \(\gamma = \sup \gamma_i\) and \(c = \sup c_i\) exist.

Finally and since all these nets are increasing, and using the lemma which shows that the supremum is compatible with addition for increasing nets, we conclude that the supremum satisfies:
$\sup x_i = \sup a_i + c_i +  \gamma_i = a + c + \gamma \quad \text{in } E \oplus \mathbb{R}.$
\end{proof}

\begin{corollary}
	Let $E$ be a unital truncated vector lattice. Then $E$ is Dedekind complete
	if and only if $E\oplus 
	\mathbb{R}
	$ is Dedekind complete.
\end{corollary}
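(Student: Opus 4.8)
The plan is to obtain the corollary as a direct consequence of the preceding theorem, the only real work being to check that in the unital setting property $(\ast)$ holds automatically. The forward implication is immediate: since by Theorem 1.1(2) $E$ is an ideal in $E\oplus\mathbb{R}$, any subset of $E$ that is bounded above in $E$ is bounded above in $E\oplus\mathbb{R}$, its supremum (which exists by Dedekind completeness of $E\oplus\mathbb{R}$) must lie in $E$, and hence $E$ is Dedekind complete. (This is also already recorded as half of the theorem.)

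For the converse, assume $E$ is unital with truncation unit $u$ and Dedekind complete. By the theorem it suffices to verify $(\ast)$. The key observation is that $\bar{E}$ is order bounded \emph{inside} $E$: in the unital case $\overline{|x|}=|x|\wedge u$, so $x\in\bar{E}$ precisely when $|x|\leq u$, i.e. $\bar{E}\subseteq[-u,u]$ with $u\in E$. Therefore any subset $A\subseteq\bar{E}$ is bounded above in $E$, and Dedekind completeness of $E$ gives $a_{0}=\sup A$ in $E$. By Lemma 5.4, $a_{0}$ is also the supremum of $A$ when $A$ is regarded as a subset of $E\oplus\mathbb{R}$. Hence $(\ast)$ holds, and the theorem then yields that $E\oplus\mathbb{R}$ is Dedekind complete.

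There is essentially no obstacle; the only point needing a moment's care is the identification $\bar{E}=[-u,u]$ in the unital case, which follows from the formula $\bar{x}=x\wedge u$ together with Theorem 1.1(1). One could equally well bypass the theorem and argue directly along the lines of its proof — given $0\leq x_{i}\uparrow\leq x+\lambda$ in $E\oplus\mathbb{R}$ with $\lambda\geq0$, reduce to $\lambda=1$, split $x_{i}=a_{i}+b_{i}$ with $a_{i}\uparrow\leq|x|$ and $b_{i}\uparrow\leq 1$ via the decomposition lemma, write $b_{i}=c_{i}+\gamma_{i}$ with $c_{i}\in\bar{E}\subseteq[-u,u]\subseteq E$ and $\gamma_{i}\in[0,1]$, take suprema of $(a_{i})$, $(c_{i})$ in the Dedekind complete space $E$ and of $(\gamma_{i})$ in $\mathbb{R}$, and recombine using the additivity of suprema for increasing nets — but routing through the theorem is cleaner and shorter.
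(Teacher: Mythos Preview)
Your proof is correct and follows essentially the same route as the paper: both reduce to verifying property $(\ast)$ via the observation that in the unital case $\bar{E}$ is bounded above by the truncation unit $u$, so Dedekind completeness of $E$ supplies the required suprema. Your explicit appeal to Lemma~5.4 to pass from a supremum in $E$ to one in $E\oplus\mathbb{R}$ is a detail the paper leaves implicit, and your alternative direct argument simply unpacks the proof of the theorem rather than offering a different method.
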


\begin{proof}
	We only need to show that property $\left( \ast \right) $ is fulfilled when $%
	E$ is Dedekind complete. 
	
	Assume that $E$ is Dedekind complete. Let $A$ be a non-empty subset of $\bar{%
		E}$, then $A$ is bounded from above by the unit of the truncation. It follow
	that $A$ has a supremum in $E.$
\end{proof}
\section{Lateral Completeness}
Recall that a Riesz space is said to be laterally complete if every disjoint family of positive elements that is bounded above admits a supremum.
We start this section by stating a theorem that will be useful

\begin{theorem}
	\cite[Theorem 7.15]{locallysolid} For an order dense Riesz subspace $E$ of an
	Archimedean Riesz space $M$ we have the following.
	
	If $E$ is laterally complete, then $E$ majorizes $M.$
\end{theorem}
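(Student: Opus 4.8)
The plan is to reduce the statement to the elementary fact that every Riesz space is majorizing in its own Dedekind completion, by using the theory of universal completions to place $M$ inside $E^{\delta}$.

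Concretely, I would proceed in three steps. First, recalling that the universal completion $E^{u}$ is an invariant of the order-density class of $E$ — any Archimedean Riesz space in which $E$ is order dense embeds, over $E$, as a Riesz subspace of $E^{u}$ — the hypothesis that $E$ is order dense in the Archimedean space $M$ yields a chain $E \subseteq M \subseteq E^{u}$. Second, I would use the factorisation $E^{u} = (E^{l})^{\delta}$ of the universal completion as the Dedekind completion of the lateral completion: since $E$ is assumed laterally complete, $E^{l} = E$, and therefore $E^{u} = E^{\delta}$. Combining the two steps, $M \subseteq E^{\delta}$. Third, I would invoke that $E$ is majorizing in $E^{\delta}$: by the very construction of the Dedekind completion, each element of $(E^{\delta})^{+}$ is the supremum of a subset of $E^{+}$ that is order bounded in $E$, hence is dominated by an element of $E^{+}$. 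Applying this to an arbitrary $x \in M \subseteq E^{\delta}$ produces $y \in E^{+}$ with $x \le y$, which is exactly the assertion that $E$ majorizes $M$.

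I do not expect a serious obstacle along this route; the two points deserving care are the identifications $E \subseteq M \subseteq E^{u}$ and $E^{u} = E^{\delta}$, both of which rest on the Archimedean hypotheses and on standard results about lateral and universal completions. Should one wish to argue more directly and without that machinery, the natural substitute is to work inside $E^{\delta}$, pick a maximal disjoint family $(e_{i})_{i \in I}$ in $E^{+}\setminus\{0\}$, observe that the principal bands $B_{e_{i}}$ are pairwise disjoint and, by maximality together with order density, satisfy $\bigvee_{i} B_{e_{i}} = E^{\delta}$, whence $x = \sup_{i} P_{B_{e_{i}}} x$; it then suffices to produce, for each $i$, an element $y_{i} \in E^{+}\cap B_{e_{i}}$ with $y_{i} \ge P_{B_{e_{i}}} x$ and to set $y := \sup_{E} y_{i}$, the supremum existing precisely because $E$ is laterally complete. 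On this second route the one genuinely delicate point — the essential use of lateral completeness — is the band-by-band domination, which is a disguised form of the factorisation $E^{u}=(E^{l})^{\delta}$ used above.
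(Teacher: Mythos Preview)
The paper does not prove this statement at all: it is quoted as \cite[Theorem~7.15]{locallysolid} and immediately applied in the proof of Theorem~6.2. There is thus no in-paper argument to compare your proposal against.

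On the substance of your sketch: the first route through $E\subseteq M\subseteq E^{u}$ together with $E^{u}=E^{\delta}$ is a legitimate high-level strategy, but you should be alert to a possible circularity. The identification $E^{u}=(E^{l})^{\delta}$ is often \emph{obtained} by showing that $E^{l}$ majorises $E^{u}$ (so that $E^{u}$ sits between $E^{l}$ and its Dedekind completion), and that majorisation is precisely an instance of the theorem you are proving. Whether this is genuinely circular depends on how the reference you rely on builds the theory; in Aliprantis--Burkinshaw the development is arranged so that Theorem~7.15 comes before the identification you invoke, so quoting the factorisation would in effect be quoting the theorem itself. Your second, band-by-band outline is closer to a self-contained argument and avoids this issue, though as you note the ``band-by-band domination'' step (producing $y_{i}\in E^{+}\cap B_{e_{i}}$ with $y_{i}\ge P_{B_{e_{i}}}x$) is exactly where the real work lies and would need to be written out; one typically does this by choosing, for each $i$, an integer multiple $n_{i}e_{i}$ dominating the component, which requires first reducing to the case where such a multiple exists.
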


Now, we move on to the statement of the main theorem of this section

\begin{theorem}
	Let $E$ be a truncated Riesz space.
	
	$\left( 1\right) $ If $E$ is laterally complete, then $E\oplus 
	\mathbb{R}
	$ is laterally complete
	
	$\left( 2\right) $ Suppose $E\oplus 
	\mathbb{R}
	$ is laterally complete and Archimedean. Then,  $E$ is laterally complete if
	and only if $E$ is unital
\end{theorem}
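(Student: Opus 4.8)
The plan is to treat the unital and non-unital cases separately and to lean on the structural facts from Theorem~1.1: $E$ is a maximal ideal in $E\oplus\mathbb{R}$, its disjoint complement equals $\mathbb{R}(1-u)$ when $E$ is unital with truncation unit $u$, and $E$ is order dense in $E\oplus\mathbb{R}$ precisely when $E$ is not unital. Two preliminary remarks will be used throughout. First, since $E$ is a maximal ideal, the quotient $(E\oplus\mathbb{R})/E$ is Riesz-isomorphic to $\mathbb{R}$ through a lattice homomorphism; as $\mathbb{R}$ contains no two nonzero disjoint elements, a disjoint family of positive elements of $E\oplus\mathbb{R}$ has at most one member with nonzero real part. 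Second, by Lemma~5.4 a supremum computed in $E$ remains the supremum of the same set when that set is viewed inside $E\oplus\mathbb{R}$.

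For part~$(1)$ with $E$ unital, $E$ is a projection band in $E\oplus\mathbb{R}$ with complementary band $\mathbb{R}(1-u)$, so $E\oplus\mathbb{R}=E\oplus\mathbb{R}(1-u)$ is an order direct sum of two laterally complete Riesz spaces. Given a disjoint, order bounded family $(x_{i})$ in $(E\oplus\mathbb{R})^{+}$, I would write $x_{i}=a_{i}+\lambda_{i}(1-u)$ with $a_{i}\in E^{+}$ and $\lambda_{i}\geq 0$: the families $(a_{i})$ and $(\lambda_{i}(1-u))$ are disjoint and order bounded, the second having at most one nonzero term, and their suprema (the first via lateral completeness of $E$ and Lemma~5.4, the second trivial) add up to $\sup x_{i}$ in $E\oplus\mathbb{R}$. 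For part~$(1)$ with $E$ non-unital, after discarding the at most one member with nonzero real part we are left with a disjoint, order bounded family $(x_{i})\subseteq E^{+}$ with upper bound $y+\beta$, $y\in E$, $\beta\geq 0$. I would decompose $x_{i}=(x_{i}\wedge y^{+})+(x_{i}-y^{+})^{+}$: the family $(x_{i}\wedge y^{+})$ is disjoint and bounded above by $y^{+}\in E$, so its supremum exists in $E$ and hence in $E\oplus\mathbb{R}$, whereas the family $(x_{i}-y^{+})^{+}$ is disjoint and dominated by a scalar multiple of $1$, hence lies in $\overline{E}$.

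The main obstacle is this last family: one must produce, in $E\oplus\mathbb{R}$, the supremum of a disjoint, order bounded family contained in $\overline{E}$, which is a lateral analogue of property $(\ast)$ from Section~5. The natural tool is the order density of $E$ in $E\oplus\mathbb{R}$, which allows one to squeeze elements of $E$ below positive elements of $E\oplus\mathbb{R}$; together with the disjointness and the lateral completeness of $E$ one aims to identify the least upper bound explicitly. Once both partial suprema, say $a$ and $b$, are in hand in $E\oplus\mathbb{R}$, I would conclude $\sup x_{i}=a+b$: the identity $x_{i}=(x_{i}\wedge y^{+})+(x_{i}-y^{+})^{+}$ gives $x_{i}\leq a+b$ directly, while for any upper bound $z$ of $(x_{i})$ one tests $z$ against the two subfamilies separately to obtain $z\geq a+b$.

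For part~$(2)$, the implication ``$E$ unital $\Rightarrow$ $E$ laterally complete'' is again the projection-band argument: $E$ is a projection band in the laterally complete space $E\oplus\mathbb{R}$, so restricting a disjoint family of positive elements of $E$, bounding it by the band projection of one of its upper bounds, and taking its supremum in $E\oplus\mathbb{R}$ (which then lies in $E$) shows that $E$ is laterally complete. For the converse I argue by contradiction: if $E$ is laterally complete but not unital, then by Theorem~1.1$(5)$ it is order dense in $E\oplus\mathbb{R}$, which is Archimedean by hypothesis, so Theorem~6.1 applies and $E$ majorizes $E\oplus\mathbb{R}$; in particular $1\leq y$ for some $y\in E^{+}$, and since $E$ is an ideal in $E\oplus\mathbb{R}$ this forces $1\in E$, i.e.\ $E$ is unital with truncation unit $1$ --- a contradiction. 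Hence $E$ is unital, and the equivalence follows. I expect part~$(2)$ to be essentially routine once Theorem~6.1 is available; the substantive work, and the place where the hypotheses genuinely enter, is the non-unital half of part~$(1)$.
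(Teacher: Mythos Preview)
Your treatment of part~(2) matches the paper's exactly: the forward direction uses that $E$ is a projection band when unital, and the converse combines the order density of $E$ (Theorem~1.1(5)), Theorem~6.1, and the ideal property to force $1\in E$, a contradiction.

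For part~(1), however, you have taken an unnecessarily complicated route and left an explicit gap. The paper does \emph{not} split into unital and non-unital cases, nor does it decompose each $x_i$ as $(x_i\wedge y^+)+(x_i-y^+)^+$. It argues directly (by a computation with the absolute-value formula; your quotient argument is a cleaner alternative) that at most one member of the disjoint positive family, say $x_{i_0}+\alpha_{i_0}$, can have strictly positive real part. Then for every $i\neq i_0$ one has $\alpha_i=0$, hence $x_i\in E^+$, and the subfamily $(x_i)_{i\neq i_0}$ is pairwise disjoint in $E$; its supremum $x$ exists in $E$ by lateral completeness of $E$ (no boundedness in $E$ is invoked here), and then in $E\oplus\mathbb{R}$ by Lemma~5.4. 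Finally $x\vee(x_{i_0}+\alpha_{i_0})$ is verified to be the supremum of the whole family.

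Your ``main obstacle'' --- producing the supremum of a disjoint family contained in $\overline{E}$ --- is therefore self-imposed. It arises from your attempt to first bound the $E$-part of the family by $y^+$, but that detour is unnecessary: once you know that all but one element already lie in $E$, lateral completeness of $E$ applies to them directly, and the single exceptional element is absorbed by a binary supremum. The plan you sketch for the $\overline{E}$-family (``squeeze elements of $E$ below'' via order density and ``identify the least upper bound explicitly'') is too vague to constitute a proof, so as written the non-unital half of your part~(1) is incomplete.
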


\begin{proof}
	$\left( 1\right) $ Let $\left( x_{i}+\alpha _{i}\right) _{i\in I}$ be a
	family of disjoint positive elements in $E\oplus 
	\mathbb{R}
	$.
	
	Case $1:$ Suppose there exist distinct $i,j\in I$ such that $\alpha _{i}>0$
	and $\alpha _{j}>0.$
	
	Since $\left( x_{i}+\alpha _{i}\right) \wedge \left( x_{j}+\alpha
	_{j}\right) =0,$ we have: 
	\begin{equation*}
		0=2\left( x_{i}+\alpha _{i}\wedge x_{j}+\alpha _{j}\right)
		=x_{i}+x_{j}+\alpha _{i}+\alpha _{j}-\mid x_{i}-x_{j}+\alpha _{i}-\alpha
		_{j}\mid .
	\end{equation*}
	
	If $\alpha _{i}=\alpha _{j},$ then $1\in E$,  which is a contradiction.

	Alternatively,  \ if $\alpha _{i}<\alpha _{j},$ we have%
	\begin{equation*}
		0 =x_{i}+x_{j}+\alpha _{i}+\alpha _{j}+2\mid \alpha _{i}-\alpha _{j}\mid 
		\overline{\frac{1}{\alpha _{i}-\alpha _{j}}\left( x_{i}-x_{j}\right)
			^{-}\vee \left( \frac{1}{\alpha _{j}-\alpha _{i}}\left( x_{i}-x_{j}\right)
			^{+}\right) }-\mid \alpha _{i}-\alpha _{j}\mid . \\
		=2\left( x_{i}\wedge x_{j}\right) +2\alpha _{i}+2\mid \alpha _{i}-\alpha
		_{j}\mid \overline{\frac{1}{\alpha _{i}-\alpha _{j}}\left(
			x_{i}-x_{j}\right) ^{-}\vee \left( \frac{1}{\alpha _{j}-\alpha _{i}}\left(
			x_{i}-x_{j}\right) ^{+}\right) }.
	\end{equation*}
	
	Also,  in this case, $1\in E$ and we have a contradiction.
	
	Case $2:$ At most one $\alpha _{i_{0}}>0.$
	
	In this case, for all $i\neq i_{0},$ we have $\alpha _{i}=0,$ so $x_{i}\in E.
	$ Since $E$ is laterally complete,  the supremum $x=\sup \left( x_{i},i\in
	I\diagdown \left\{ i_{0}\right\} \right) $ exists in $E.$ Let $y=x\vee
	\left( x_{i_{0}}+\alpha _{i_{0}}\right) \in E\oplus 
	\mathbb{R}
	.$
	
	Next,  we show that $y=\sup \left( x_{i}+\alpha _{i},i\in I\right) .$
	
	It is clear that,  for all $i\in I,$ $x_{i}+\alpha _{i}\leq y.$ Moreover,  if $%
	z\in E\oplus 
	\mathbb{R}
	$ such that $z\geq x_{i}+\alpha _{i},$ for all $i\in I,$ then for all $i\in
	I\diagdown \left\{ i_{0}\right\} ,$ $z\geq x_{i}.$
	
	Since $x=\sup \left( x_{i},i\in I\diagdown \left\{ i_{0}\right\} \right) \in
	E$ then,  using Lemma 5.4,  it follows that $x\leq z.$ Also,  $z\geq
	x_{i_{0}}+\alpha _{i_{0}}.$ Therefore,  $z\geq x\vee \left( x_{i_{0}}+\alpha
	_{i_{0}}\right) =y.$
	
	Finally,  $x\vee \left( x_{i_{0}}+\alpha _{i_{0}}\right) =\sup \left(
	x_{i}+\alpha _{i},i\in I\right) $ in $E\oplus 
	\mathbb{R}
	.$
	
	$\left( 2\right) $ We suppose that $E\oplus 
	\mathbb{R}
	$ is laterally complete and Archimedean.
	
	If $E$ is unital,  then $E$ is a band in $E\oplus 
	\mathbb{R}
	.$
	Since $E\oplus 
	\mathbb{R}
	$ is laterally complete, any band in it is also laterally complete.  Thus,  $E$
	is laterally complete.
	
	Conversely,  suppose that $E$ is laterally complete.  Assume,  for contradiction,  that $E$ is not unital. 
	
	Because $E$ is dense in $E\oplus 
	\mathbb{R}
	$ in this situation, Theorem 6.1 implies that $E$ is majorizing in $E\oplus 
	\mathbb{R}
	$.  However,  since $E$ is an ideal in $E\oplus 
	\mathbb{R}
	$,  it follows that $E=E\oplus 
	\mathbb{R}
	$,  which is a contradiction. 
	Thus,  we conclude that $E$ is unital.
\end{proof}
\section{Universal Completeness}
In this section, we will investigate the universal completeness of the
unitization $E\oplus 
\mathbb{R}
$. Let us recall that a Riesz space is universally complete if and only if
it is both Dedekind complete and laterally complete.

To facilitate the statement of the main theorem of this section, the
following characterization of Archimedean truncation on a universally
complete vector lattice will be useful.

\begin{lemma}\cite{Boulstructuretheorem}
	Any Archimedean truncation on a universally complete vector lattice is
	unital.
\end{lemma}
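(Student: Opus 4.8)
The plan is to show that an Archimedean truncation $\bar{\cdot}$ on a universally complete vector lattice $E$ must be unital, i.e. that there exists a (necessarily unique) weak order unit $u \in E^{+}$ with $\bar{x} = x \wedge u$ for all $x \in E^{+}$. By Lemma 2.3 (a truncation is determined by its fixed-point set), it suffices to exhibit an element $u \in E^{+}$ whose principal ideal $\{x : |x| \le u\}$ coincides with $\bar{E}$; equivalently, it suffices to produce $u \in \bar{E}$ that is an upper bound of $\bar{E}$, since then for $x \in E^{+}$ one has $\bar{x} \le u$ and $\bar{x} \le x$, hence $\bar{x} \le x \wedge u$, while $x \wedge u \le u \in \bar{E}$ gives $x \wedge u \in \bar{E}$ by Proposition 2.2(5), and then $\overline{x \wedge u} = x \wedge u$ together with $x \wedge u \le x$ and $(\tau_1)$ forces $x \wedge u \le \bar{x}$.

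The natural candidate is $u := \sup \bar{E}$, so the first substantive step is to show this supremum exists. Here universal completeness enters: I would first check that $\bar{E}^{+} = \{\bar{x} : x \in E^{+}\}$ is \emph{upward directed} — given $\bar{x}, \bar{y}$, the element $\overline{x \vee y}$ dominates both by Proposition 2.2(2), and it lies in $\bar{E}$. So $\bar{E}^{+}$ is a directed set; if it is bounded above, Dedekind completeness gives the supremum. If it is not bounded above, I would use lateral completeness instead: decompose the problem using a maximal disjoint subfamily of $\bar{E}^{+}$ (existence by Zorn), take its supremum laterally, and argue via the band structure of a universally complete space that this yields a weak order unit; the Archimedean axiom $(\tau_3)$ is what rules out the degenerate possibility that forces $\bar{E}^{+}$ to behave pathologically. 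Concretely, $(\tau_3)$ says no nonzero $a$ has $\overline{na} = na$ for all $n$, which prevents $\bar{E}$ from containing an unbounded ray and is exactly the leverage needed to control the supremum.

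Once $u = \sup \bar{E}^{+}$ is in hand, the remaining steps are: (i) $u \in \bar{E}$, because $\bar{E}$ is closed under suprema of directed subsets — this follows from $(\tau_1)$ and order continuity considerations, using that $\overline{u} \ge \overline{\bar{x}} = \bar{x}$ for every $\bar{x}$, so $\overline{u}$ is also an upper bound of $\bar{E}^{+}$, whence $u \le \overline{u} \le u$; (ii) $u$ is a weak order unit, which is immediate from $(\tau_2)$ once $u$ is a truncation unit, but can also be seen directly since a nonzero band disjoint from $u$ would, by $(\tau_1)$ and $(\tau_3)$, produce an element contradicting the definition of the truncation; (iii) the verification $\bar{x} = x \wedge u$ as sketched above.

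The main obstacle I anticipate is the existence of $\sup \bar{E}^{+}$ when this set is \emph{not} order bounded: Dedekind completeness alone does not apply, and one must genuinely invoke lateral completeness together with $(\tau_3)$ to manufacture the unit. The delicate point is organizing the disjoint decomposition so that the laterally-constructed supremum is simultaneously a fixed point of the truncation and an upper bound for \emph{all} of $\bar{E}$ (not merely for the disjoint subfamily chosen) — here the idempotency $\bar{\bar{x}} = \bar{x}$ and the identity $a \wedge \bar{b} = \bar{a} \wedge b$ from Proposition 2.1 will be the workhorses, and care is needed to ensure that the argument does not secretly assume boundedness. Everything after the construction of $u$ is routine manipulation with the truncation axioms.
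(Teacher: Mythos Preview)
The paper does not prove this lemma; it is quoted from \cite{Boulstructuretheorem} and used as a black box in Theorem~7.2, so there is no proof in the paper to compare your attempt against. That said, your overall plan is sound once $u=\sup\bar E^{+}$ is known to exist: your arguments for (i) and (iii) are correct, and (ii) follows. The genuine gap is precisely where you locate it, and your proposed fix does not close it. A maximal disjoint subfamily $\{v_\alpha\}$ of $\bar E^{+}\setminus\{0\}$ has a lateral supremum $w$, and (via $(\tau_2)$) $w$ is indeed a weak order unit of $E$; but $w$ need not dominate $\bar E^{+}$. In $E=\mathbb R^{\mathbb N}$ with $\bar x=x\wedge\mathbf 1$, the family $\{\tfrac12 e_n:n\in\mathbb N\}$ is maximal disjoint in $\bar E^{+}\setminus\{0\}$, yet its supremum $\tfrac12\mathbf 1$ fails to bound $\mathbf 1\in\bar E^{+}$. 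Lateral completeness applied to a disjoint subfamily of $\bar E^{+}$ controls support but not size, so it cannot by itself manufacture the truncation unit.

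A short route that avoids this difficulty, using only results already available in the paper, is to argue through the unitization: $E$ is Dedekind complete, hence Archimedean, so Theorem~3.1 gives that $E\oplus\mathbb R$ is Archimedean. If $E$ were not unital, then by Theorem~1.1(5) $E$ is order dense in $E\oplus\mathbb R$, and since $E$ is laterally complete, Theorem~6.1 forces $E$ to majorize $E\oplus\mathbb R$; but $E$ is an ideal in $E\oplus\mathbb R$, so this yields $1\in E$, a contradiction. If you prefer a direct construction inside $E$, fix a weak order unit $e$, let $P_n$ be the band projection onto the band generated by $ne-\overline{ne}$, use $(\tau_3)$ to show $P_n\uparrow I$, and build $u$ as the lateral supremum of the disjoint pieces $(P_n-P_{n-1})(\overline{ne})$; this works but is substantially longer.
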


The main result of this section is the following:

\begin{theorem}
	Let $E$ be a truncated Riesz space. 
	
	\begin{enumerate}
		\item  Assume that $E$ is universally complete. Then $E\oplus 
		\mathbb{R}
		$ is universally complete if and only if the truncation in $E$ satisfy $%
		\left( \tau _{3}\right) $.
		
		\item Assume that $E\oplus 
		\mathbb{R}
		$ is universally complete. Then $E$ is universally complete if and only if $E
		$ is unital.
	\end{enumerate}
	
	\begin{proof}
		
		\begin{enumerate}
			\item Assume that $E$ is universally complete.
			
			We have already shown that if $E\oplus 
			\mathbb{R}
			$  is Archimedean,  then the truncation
			in E satisfies $\left( \tau _{3}\right) $. Therefore,  the universal
			completeness of $E\oplus 
			\mathbb{R}
			$ implies that condition $\left( \tau _{3}\right) $ is fulfilled.
			Conversely,  since $E$ is universally complete,  if $\left( \tau _{3}\right) $
			is satisfied,  then by the preceding lemma,  $E$ is unital.  Hence,  using
			Corollary 5.6 and Theorem 6.2,  $E\oplus 
			\mathbb{R}
			$ is universally complete.
			
			\item Assume that $E\oplus 
			\mathbb{R}
			$ is universally complete.
			
			In the unital case, as before, the two preceding sections allow us to deduce
			that $E$ is universally complete. Moreover, since $E\oplus 
			\mathbb{R}
			$ is universally complete, then condition $\left( \tau _{3}\right) $ is
			guaranteed, and consequently, thanks to the preceding lemma, $E$ is indeed
			unital.\qquad 
		\end{enumerate}
	\end{proof}
\end{theorem}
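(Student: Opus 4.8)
\emph{Approach.} The strategy is to route both equivalences through the \emph{unital} case. The key observation is that as soon as $E$ is universally complete and its truncation satisfies $(\tau_{3})$, the structure lemma (Lemma~7.1 above, from \cite{Boulstructuretheorem}) forces $E$ to be unital; after that, Corollary~5.6 and the lateral completeness theorem (Theorem~6.2) supply the rest with no extra work. Throughout I use that a universally complete Riesz space is by definition both Dedekind and laterally complete, that Dedekind completeness implies the Archimedean property, and that an ideal of a Dedekind complete Riesz space is again Dedekind complete (so Dedekind completeness descends from $E\oplus\mathbb{R}$ to $E$, since $E$ is an ideal by Theorem~1.1).

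For part~(1): if $E\oplus\mathbb{R}$ is universally complete then it is Dedekind complete, hence Archimedean, and Theorem~3.1 immediately gives that the truncation on $E$ satisfies $(\tau_{3})$. Conversely, assume $E$ is universally complete and $(\tau_{3})$ holds. Then $\bar{\cdot}$ is an Archimedean truncation on the universally complete vector lattice $E$, so by Lemma~7.1 the space $E$ is unital. Being unital and Dedekind complete, Corollary~5.6 yields that $E\oplus\mathbb{R}$ is Dedekind complete; being laterally complete, part~(1) of Theorem~6.2 yields that $E\oplus\mathbb{R}$ is laterally complete. Hence $E\oplus\mathbb{R}$ is universally complete.

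For part~(2): assume $E\oplus\mathbb{R}$ is universally complete. If $E$ is unital, then Dedekind completeness of $E\oplus\mathbb{R}$ descends to the ideal $E$, and since $E\oplus\mathbb{R}$ is both laterally complete and Archimedean, part~(2) of Theorem~6.2 shows $E$ is laterally complete; hence $E$ is universally complete. Conversely, if $E$ is universally complete, then $E\oplus\mathbb{R}$ is Archimedean, so Theorem~3.1 gives $(\tau_{3})$, and then exactly as in part~(1) the structure lemma applied to the universally complete lattice $E$ shows that $E$ is unital.

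The step I expect to be the crux — and the only one that is not a routine citation — is the reduction to unitality via Lemma~7.1: once one sees that ``universally complete $+$ $(\tau_{3})$'' already entails ``unital'', both equivalences collapse to the unital results of Sections~5 and~6. A secondary point needing care is invoking the correct direction of Theorem~6.2 in each place: the implication ``$E$ laterally complete $\Rightarrow E\oplus\mathbb{R}$ laterally complete'' (part~(1), no unitality needed) is used in part~(1), while the implication ``$E\oplus\mathbb{R}$ laterally complete and Archimedean, $E$ unital $\Rightarrow E$ laterally complete'' (part~(2)) is used in part~(2).
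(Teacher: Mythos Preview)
Your proof is correct and follows essentially the same route as the paper: both directions of both equivalences are reduced to the unital case via Lemma~7.1, after which Corollary~5.6 and Theorem~6.2 do the work. Your write-up is in fact somewhat more explicit than the paper's in isolating which direction of Theorem~6.2 is invoked where and why Dedekind completeness descends from $E\oplus\mathbb{R}$ to the ideal $E$.
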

\section{Projection Property}
In this section, we study how the projection property transfers between a
truncated Riesz space $E$ and its unitization $E\oplus \mathbb{R}$.

A Riesz space $E$ is said to have the projection property if every band of $E
$ is a projection band. 

We recall the following result which characterizes the ideals that are
projection bands.

\begin{theorem}\cite[Theorem 1.41]{positiveoperator}
	For an ideal $B$ in a Riesz space $E$ the following statements are
	equivalent.
	
	\begin{enumerate}
		\item $B$ is a projection band i.e, $E:B\oplus B^{d}$ holds.
		
		\item For each $x\in E^{+}$ the supremum of the set $B^{+}\cap \left[ 0,x%
		\right] $ exists in $E$ and belongs to $B.$
		
		\item There exists an ideal $A$ of $E$ such that $E=B\oplus A$ holds.
	\end{enumerate}
\end{theorem}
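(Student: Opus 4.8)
The plan is to prove the cycle $(1)\Rightarrow(2)\Rightarrow(1)$ together with $(1)\Leftrightarrow(3)$, using only the definition of the disjoint complement $B^{d}$ and elementary lattice identities. The implication $(1)\Rightarrow(3)$ is trivial: take $A=B^{d}$. For $(3)\Rightarrow(1)$, assume $E=B\oplus A$ with $A$ an ideal. I would first note that $B$ and $A$ are disjoint, since for $b\in B^{+}$ and $a\in A^{+}$ the element $b\wedge a$ belongs to $B\cap A=\{0\}$ ($B$ and $A$ being ideals); hence $A\subseteq B^{d}$. For the reverse inclusion, pick $x\in B^{d}$ and write $x=b+a$ with $b\in B$ and $a\in A$; disjointness of $b$ and $a$ gives $|x|=|b|+|a|\geq|b|$, while $x\in B^{d}$ forces $|x|\wedge|b|=0$, so $|b|=0$ and $x=a\in A$. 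Thus $A=B^{d}$ and $(1)$ holds.

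For $(1)\Rightarrow(2)$: given $E=B\oplus B^{d}$ and $x\in E^{+}$, decompose $x=x_{1}+x_{2}$ with $x_{1}\in B^{+}$ and $x_{2}\in(B^{d})^{+}$ (positivity of the components follows from positivity of the band projections). Then $x_{1}\in B^{+}\cap[0,x]$, and for any $y\in B^{+}\cap[0,x]$ one has $y\wedge x_{2}=0$ because $y\in B$ and $x_{2}\in B^{d}$; using the inequality $y\wedge(x_{1}+x_{2})\leq y\wedge x_{1}+y\wedge x_{2}$ valid for positive elements, we obtain $y=y\wedge x=y\wedge(x_{1}+x_{2})\leq y\wedge x_{1}+y\wedge x_{2}=y\wedge x_{1}\leq x_{1}$. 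Hence $x_{1}=\sup\bigl(B^{+}\cap[0,x]\bigr)$ and this supremum lies in $B$.

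The substantive step is $(2)\Rightarrow(1)$. For $x\in E^{+}$ set $x_{1}:=\sup\bigl(B^{+}\cap[0,x]\bigr)\in B^{+}$ and $x_{2}:=x-x_{1}\geq0$. The key claim is that $x_{2}\in B^{d}$. To prove it, fix $b\in B^{+}$, put $w:=x_{2}\wedge b$, and observe $w\in B$ (as $B$ is an ideal) with $0\leq w\leq x_{2}=x-x_{1}$; therefore $x_{1}+w\in B^{+}$ and $x_{1}+w\leq x$, so $x_{1}+w\in B^{+}\cap[0,x]$, whence $x_{1}+w\leq x_{1}$ and $w=0$. Thus $x_{2}\perp b$ for every $b\in B^{+}$, i.e.\ $x_{2}\in B^{d}$, and $x=x_{1}+x_{2}\in B+B^{d}$. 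Since $E=E^{+}-E^{+}$ and $B+B^{d}$ is a linear subspace, $E=B+B^{d}$; the sum is direct because $B\cap B^{d}=\{0\}$ (any element disjoint from itself is $0$). Hence $B$ is a projection band, which closes the cycle.

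I expect the only genuine obstacle to be the disjointness claim in $(2)\Rightarrow(1)$: showing that the residual part $x-\sup(B^{+}\cap[0,x])$ is disjoint from $B$. The argument above resolves it purely from the maximality of the supremum, and everything else reduces to standard manipulations with ideals and disjoint elements.
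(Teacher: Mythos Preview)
The paper does not supply its own proof of this statement: it is quoted verbatim as \cite[Theorem~1.41]{positiveoperator} and used only as a tool in the subsequent result. Your argument is correct and is, in fact, essentially the classical proof one finds in that reference: the cycle $(1)\Rightarrow(2)\Rightarrow(1)$ via the maximality of $x_{1}=\sup\bigl(B^{+}\cap[0,x]\bigr)$, together with the elementary equivalence $(1)\Leftrightarrow(3)$ via $A\subseteq B^{d}$ and the disjoint-sum identity $|b+a|=|b|+|a|$. The only step worth a remark is the inequality $y\wedge(x_{1}+x_{2})\le y\wedge x_{1}+y\wedge x_{2}$ you invoke in $(1)\Rightarrow(2)$: it is valid for positive elements (it follows at once from the Riesz decomposition property), but one could equally well bypass it by decomposing $y\le x_{1}+x_{2}$ as $y=y_{1}+y_{2}$ with $0\le y_{i}\le x_{i}$ and observing $y_{2}\in B\cap B^{d}=\{0\}$. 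Either way, nothing is missing.
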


Our result in this section is as follows:

\begin{theorem}
	Let $E$ be a truncated Riesz space.
	
	\begin{enumerate}
		\item If $E\oplus 
		\mathbb{R}
		$ has the projection property than $E$ has the projection property
		
		\item If $E$ is unital and has the projection property then $E\oplus 
		\mathbb{R}
		$ has the projection property
	\end{enumerate}
\end{theorem}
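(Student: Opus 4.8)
The plan is to prove the two implications separately, in each case exploiting that $E$ is an ideal — indeed a maximal ideal — of $E\oplus\mathbb{R}$.

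For (1), suppose $E\oplus\mathbb{R}$ has the projection property and let $B$ be a band of $E$. Let $\widehat{B}$ denote the band of $E\oplus\mathbb{R}$ generated by $B$; by hypothesis it is a projection band, so there is a band projection $P\colon E\oplus\mathbb{R}\to\widehat{B}$. The first observation is that $P$ leaves $E$ invariant: for $x\in E^{+}$ one has $P(x)=\sup\{\,y\in\widehat{B}:0\le y\le x\,\}\le x$, and since $E$ is an ideal this forces $P(x)\in E$; by linearity $P(E)\subseteq E$. Consequently, for every $x\in E$ the decomposition $x=P(x)+(x-P(x))$ has both summands in $E$, with $P(x)\in\widehat{B}\cap E$ and $x-P(x)\in\widehat{B}^{d}\cap E$; since these two sets are ideals of $E$ with trivial intersection, $E=(\widehat{B}\cap E)\oplus(\widehat{B}^{d}\cap E)$, and by the characterization of projection bands recalled above $\widehat{B}\cap E$ is a projection band of $E$. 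So the proof reduces to showing $\widehat{B}\cap E=B$.

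That equality is the crux. The inclusion $B\subseteq\widehat{B}\cap E$ is clear once one notes $\widehat{B}\cap E$ is a band of $E$: it is an ideal, and it absorbs suprema of increasing nets because, by Lemma 5.4, these agree with the suprema computed in $E\oplus\mathbb{R}$, where $\widehat{B}$ is a band. For the reverse inclusion I would introduce
\[
C:=\{\,y\in E\oplus\mathbb{R}:\ |y|\wedge j\in B\ \text{for every}\ j\in E^{+}\,\}.
\]
A routine check shows $C$ is an ideal of $E\oplus\mathbb{R}$ containing $B$; it is moreover a band, for if $0\le y_{\alpha}\uparrow y$ with $y_{\alpha}\in C$, then $y_{\alpha}\wedge j\uparrow y\wedge j$ for each $j\in E^{+}$, and $y\wedge j\in E$ lies in $B$ because $B$ is a band of $E$. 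Finally, taking $j=|y|$ in the defining condition gives $C\cap E\subseteq B$, so $C\cap E=B$. As $\widehat{B}$ is the smallest band of $E\oplus\mathbb{R}$ containing $B$, we get $\widehat{B}\subseteq C$ and hence $\widehat{B}\cap E\subseteq C\cap E=B$. This settles $\widehat{B}\cap E=B$, and with it (1). I expect this to be the main obstacle: one must prevent the band generated by $B$ inside $E\oplus\mathbb{R}$ from growing when intersected back with $E$, and the auxiliary band $C$ is exactly what makes this work without any Archimedean assumption.

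For (2), suppose $E$ is unital with truncation unit $u$ and has the projection property. By the structure theorem from the Introduction, $E^{\perp}=\mathbb{R}(1-u)$ in $E\oplus\mathbb{R}$, and since $E$ is an ideal of codimension one with $x+\lambda=(x+\lambda u)+\lambda(1-u)$, this yields the band decomposition $E\oplus\mathbb{R}=E\oplus\mathbb{R}(1-u)$ into two complementary projection bands. Now $\mathbb{R}(1-u)\cong\mathbb{R}$ has the projection property trivially and $E$ has it by hypothesis, so it suffices to observe that the direct sum of two projection bands, each with the projection property, again has the projection property: if $F=F_{1}\oplus F_{2}$ with $F_{1},F_{2}$ projection bands and $\mathcal{D}$ is a band of $F$, then $\mathcal{D}=(\mathcal{D}\cap F_{1})\oplus(\mathcal{D}\cap F_{2})$ since $P_{F_{i}}(d)\in\mathcal{D}$ for $d\in\mathcal{D}$; each $\mathcal{D}\cap F_{i}$ is a band of $F_{i}$, hence a projection band there with complementary ideal $A_{i}$, and then $A_{1}\oplus A_{2}$ is an ideal of $F$ satisfying $F=\mathcal{D}\oplus(A_{1}\oplus A_{2})$, so $\mathcal{D}$ is a projection band. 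Taking $F=E\oplus\mathbb{R}$, $F_{1}=E$, $F_{2}=\mathbb{R}(1-u)$ completes the proof.
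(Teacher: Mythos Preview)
Your proof is correct in both parts.

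For part~(2), your argument is essentially the paper's: both use that in the unital case $E\oplus\mathbb{R}=E\oplus E^{d}$ with $E^{d}=\mathbb{R}(1-u)$, then split an arbitrary band $\mathcal{D}$ as $(\mathcal{D}\cap E)\oplus(\mathcal{D}\cap E^{d})$, apply the projection property in each summand, and reassemble. You phrase the last step as a general lemma about direct sums of projection bands, while the paper carries out the decomposition by hand on $x^{+}$ and $x^{-}$; the content is the same.

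For part~(1) your route is genuinely more careful than the paper's. The paper simply asserts that a band $B$ of $E$ ``is also a band in the larger space $E\oplus\mathbb{R}$'' and then applies the projection property directly to $B$. That assertion is not justified and is in fact false in general (for instance $B=E$ is a band of $E$ but, when $E$ is not unital, $E^{d}=\{0\}$ so $E^{dd}=E\oplus\mathbb{R}\neq E$). You avoid this by passing to the band $\widehat{B}$ generated by $B$ in $E\oplus\mathbb{R}$, restricting the resulting projection to $E$, and then proving the key identity $\widehat{B}\cap E=B$ via the auxiliary band
\[
C=\{\,y\in E\oplus\mathbb{R}:\ |y|\wedge j\in B\ \text{for all}\ j\in E^{+}\,\}.
\]
This is exactly the missing verification, and it goes through without any Archimedean hypothesis. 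One minor remark: the inclusion $B\subseteq\widehat{B}\cap E$ is immediate from $B\subseteq\widehat{B}$ and $B\subseteq E$, so you do not actually need that $\widehat{B}\cap E$ is a band of $E$ for that direction; the substance lies entirely in the reverse inclusion, which your $C$-argument handles.
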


\begin{proof}
	
	\begin{enumerate}
		\item Let $B$ be a band in $E$. Then $B$ is also a band in the larger space $%
		E\oplus 
		\mathbb{R}
		$, and we have the orthogonal decomposition:%
		\begin{equation*}
			E\oplus 
			\mathbb{R}
			=B\oplus B^{d}.
		\end{equation*}
		
		Considering the ideal $B^{\prime }=B^{d}\cap E$ within $E$, we obtain the
		direct sum decomposition: 
		\begin{equation*}
			E=B\oplus B^{\prime }.
		\end{equation*}
		
		This direct sum decomposition demonstrates that $B$ is a projection band in $%
		E$.
		
		\item Let $B$ be a band in $E\oplus 
		\mathbb{R}
		$. Since $E$ is unital, $E$ is a band in $E\oplus 
		\mathbb{R}
		$, and we have the orthogonal decomposition:%
		\begin{equation*}
			E\oplus 
			\mathbb{R}
			=E\oplus E^{d}.
		\end{equation*}
		
		There exist two ideal $B_{1}$ and $B_{2}$ in $E$ and $E^{d}$ respectively
		such that 
		\begin{equation*}
			E=\left( B\cap E\right) \oplus B_{1}\text{ and }E^{d}=\left( B\cap
			E^{d}\right) \oplus B_{2}.
		\end{equation*}
		
		Combining these decompositions, we get:%
		\begin{eqnarray*}
			E\oplus 
			\mathbb{R}
			&=&\left( B\cap E\right) \oplus B_{1}\oplus \left( B\cap E^{d}\right) \oplus
			B_{2} \\
			&=&\left( \left( B\cap E\right) \oplus \left( B\cap E^{d}\right) \right)
			\oplus \left( B_{1}\oplus B_{2}\right) .
		\end{eqnarray*}
		
		Now, let's show that $B=\left( B\cap E\right) \oplus \left( B\cap
		E^{d}\right) .$ To this end, let $x\in B.$ We can decompose $x$ as $%
		x=x^{+}-x^{-}$. Further, we can decompose $x^{+}$ and $x^{-}$ based on their
		components in $E$ and $E^{d}:$%
		\begin{equation*}
			x^{+}=a_{1}+a_{2}\text{ and }x^{-}=b_{1}+b_{2}
		\end{equation*}
		where,

			\begin{itemize}
				\item $a_{1}=\sup \left( z\in E;0\leq z\leq x^{+}\right) \in E$: The
				component of $x^{+}$ in $E$
				
				\item $b_{1}=\sup \left( z\in E^{d};0\leq z\leq x^{-}\right) \in E:$ The
				component of $x^{-}$ in $E$
				
				\item $a_{2}=\sup \left( z\in E;0\leq z\leq x^{+}\right) \in E^{d}$: The
				component of $x^{+}$ in $E^{d}$
				
				\item $b_{2}=\sup \left( z\in E;0\leq z\leq x^{-}\right) \in E^{d}$: The
				component of $x^{-}$ in $E^{d}$
			\end{itemize}

		Since $x^{+}\in B$, $0\leq a_{1}\leq x^{+}$ and $B$ is an ideal, we have $%
		a_{1}\in E\cap B.$ Similarly, $b_{1}\in E\cap B.$ 
		
		Therefore,  the component of $x$ in $E$ is $a_{1}-b_{1}\in B\cap E.$
		
		Analogously,  the component of $x$ in $E^{d}$ is $a_{2}-b_{2}\in B\cap E^{d}.$
		
		Combining these components, we have:%
		\begin{equation*}
			x=a_{1}-b_{1}+a_{2}-b_{2}\in \left( B\cap E\right) \oplus \left( B\cap
			E^{d}\right) 
		\end{equation*}
		
	Hence, $B=\left( B\cap E\right) \oplus \left( B\cap E^{d}\right) $, and
		we conclude that:%
		\begin{equation*}
			E\oplus 
			\mathbb{R}
			=B\oplus \left( B_{1}\oplus B_{2}\right) 
		\end{equation*}
		
		This direct sum decomposition shows that $B$ is a projection band in $%
		E\oplus 
		\mathbb{R}
		$.
	\end{enumerate}
\end{proof}


\bibliographystyle{spmpsci}
\bibliography{biblio}

\begin{thebibliography}{1}
\providecommand{\url}[1]{{#1}}
\providecommand{\urlprefix}{URL }
\expandafter\ifx\csname urlstyle\endcsname\relax
  \providecommand{\doi}[1]{DOI~\discretionary{}{}{}#1}\else
  \providecommand{\doi}{DOI~\discretionary{}{}{}\begingroup
  \urlstyle{rm}\Url}\fi

\bibitem{positiveoperator}
Aliprantis, C.D., Burkinshaw, O.: Positive Operators.
\newblock Springer-Verlag, Berlin, Heidelberg (2006)

\bibitem{locallysolid}
Aliprantis, C.D., Burkinshaw, O., Aliprantis, C.D.: Locally solid Riesz spaces
  with applications to economics, 2nd ed. edn.
\newblock American Mathematical Society, Providence, R. I (2003)

\bibitem{Ball1}
Ball, R.: Truncated abelian lattice-ordered groups {I}: The pointed (yosida)
  representation.
\newblock Topology and its Applications \textbf{162} (2013).
\newblock \doi{10.1016/j.topol.2013.11.007}

\bibitem{Ball2}
Ball, R.: Truncated abelian lattice-ordered groups {II}: the pointfree (madden)
  representation.
\newblock Topology and its Applications \textbf{178} (2014).
\newblock \doi{10.1016/j.topol.2014.08.031}

\bibitem{Boulstructuretheorem}
Boulabiar, K.: A structure theorem for truncations on an archimedean vector
  lattice.
\newblock Algebra universalis \textbf{85} (2024).
\newblock \doi{10.1007/s00012-024-00858-4}

\bibitem{BoulaChiheb}
Boulabiar, K., Adeb, C.: Unitization of ball \textit{l}-truncated groups.
\newblock Algebra Universalis \textbf{78}, 1--12 (2017).
\newblock \doi{10.1007/s00012-017-0444-1}

\bibitem{bouhafsimahfoudhi}
Boulabiar, K., Hafsi, H., Mounir, M.: Alexandroff unitization of a truncated
  vector lattice.
\newblock Algebra universalis \textbf{79}, 48 (2018).
\newblock \doi{10.1007/s00012-018-0532-x}

\bibitem{Riesz1}
Luxemburg, W., Zaanen, A.: Riesz Spaces.
\newblock No. vol.~1 in Mathematical studies. North-Holland (1971)

\end{thebibliography}
\end{document}